\documentclass[a4paper,oneside]{amsart}
\usepackage[T1]{fontenc}
\usepackage[utf8]{inputenc}
\usepackage[english]{babel}
\usepackage[autostyle=true]{csquotes}
\usepackage{latexsym, amssymb, amsmath}
\usepackage{amsfonts}
\usepackage{amsthm}
\usepackage{mathabx}
\makeatletter
\renewcommand*\env@matrix[1][*\c@MaxMatrixCols c]{%
	\hskip -\arraycolsep
	\let\@ifnextchar\new@ifnextchar
	\array{#1}}
\makeatother
\usepackage{url}
\usepackage{float}
\usepackage{caption}
\usepackage{subcaption}
\usepackage{xcolor}
\usepackage{bbm}
\usepackage[textsize=scriptsize]{todonotes}
\usepackage[shortlabels]{enumitem}
\usepackage{thmtools,mathtools}
\usepackage{nameref,hyperref,cleveref}
\hypersetup{%
    colorlinks=true, linktocpage=true, pdfstartpage=3, pdfstartview=FitV,%
    breaklinks=true, pdfpagemode=UseNone, pageanchor=true, pdfpagemode=UseOutlines,%
    plainpages=false, bookmarksnumbered, bookmarksopen=true, bookmarksopenlevel=1,%
    hypertexnames=true, pdfhighlight=/O,
    urlcolor=webbrown, linkcolor=RoyalBlue, citecolor=webgreen, 
    pdfsubject={},%
    pdfkeywords={},%
    pdfcreator={pdfLaTeX},%
}   
\definecolor{webgreen}{rgb}{0,.5,0}
\definecolor{webbrown}{rgb}{.5,.5,.5}
\definecolor{RoyalBlue}{rgb}{0,0,.5}
\usepackage[displaymath, mathlines,modulo,pagewise]{lineno}
\usepackage{hyperref}
\usepackage[ruled,vlined]{algorithm2e}
\usepackage{tikz-cd}
\usepackage{graphicx}
\graphicspath{ {./images/} }
\usepackage{mathtools}

\newcommand{\bF}{\mathbb{F}}

\newcommand{\cC}{\mathcal{C}}
\newcommand{\cV}{\mathcal{V}}

\newcommand{\mg}{\mathfrak{g}}
\newcommand{\mh}{\mathfrak{h}}
\newcommand{\Ass}{\mathbf{AAlg}}
\newcommand{\LieAlg}{\mathbf{LieAlg}}
\newcommand{\LeibAlg}{\mathbf{LeibAlg}}
\newcommand{\Pois}{\mathbf{PoisAlg}}
\newcommand{\NAlg}{\mathbf{NAlg}}
\newcommand{\CPoisAlg}{\mathbf{CPoisAlg}}
\newcommand{\CAAlg}{\mathbf{CAAlg}}
\newcommand{\Grp}{\mathbf{Grp}}

\newcommand{\id}{\operatorname{id}}
\newcommand{\Hom}{\operatorname{Hom}}
\newcommand{\Bim}{\operatorname{Bim}}
\newcommand{\SplExt}{\operatorname{SplExt}}
\newcommand{\Imm}{\operatorname{Im}}
\newcommand{\End}{\operatorname{End}}
\newcommand{\Der}{\operatorname{Der}}
\newcommand{\Bider}{\operatorname{Bider}}
\newcommand{\Act}{\operatorname{Act}}
\newcommand{\Aut}{\operatorname{Aut}}

\newcommand{\Inn}{\operatorname{Inn}}

\newtheorem{thm}{Theorem}[section]

\newtheorem{corollary}[thm]{Corollary}
\newtheorem{definition}[thm]{Definition}
\newtheorem{rem}[thm]{Remark}
\newtheorem{prop}[thm]{Proposition}
\newtheorem{ex}[thm]{Example}

\makeatletter
\g@addto@macro{\endabstract}{\@setabstract}
\newcommand{\authorfootnotes}{\renewcommand\thefootnote{\@fnsymbol\c@footnote}}%
\makeatother

\date{}

\usepackage[%
backend=bibtex,%
style=numeric-comp,%
sorting=nyt,%
hyperref,%
maxnames=50,
]{biblatex}

\AtBeginBibliography{}

\renewbibmacro{in:}{%
	\ifentrytype{article}
	{}
	{\bibstring{in}%
		\printunit{\intitlepunct}}}

\makeatletter
\@namedef{subjclassname@2020}{\textup{2020} Mathematics Subject Classification}
\makeatother

\addbibresource{Leibniz_and_Poisson_algebras.bib}

\begin{document}
	\sloppy
	\begin{center}
	  \LARGE 
	  \title[On the representability of actions of Leibniz and Poisson algebras]{On the representability of actions of \\ Leibniz algebras and Poisson algebras} \par \bigskip
	  
	  \subjclass[2020]{08C05, 18E13, 17A32, 17B63, 16B50, 17A36, 16W25}
	  \keywords{Action representable category, split extension, associative algebra, Leibniz algebra, Poisson algebra. \\ This work is supported by University of Milan, University of Palermo, University of Turin and by the “National Group for Algebraic and Geometric Structures, and their Applications” (GNSAGA – INdAM). The second author is also a postdoctoral researcher of the Fonds de la Recherche Scientifique--FNRS}
	  
	  \maketitle
		
		\normalsize
		\authorfootnotes
		ALAN S.\ CIGOLI \textsuperscript{1}, MANUEL MANCINI 
		\textsuperscript{2,3} 
		AND
		GIUSEPPE METERE \textsuperscript{4} \par \bigskip 
		
		\textsuperscript{1} Dipartimento di Matematica ``Giuseppe Peano'', Università degli Studi di Torino, via Carlo Alberto 10, 10123 Torino, Italy. \\ \email{alan.cigoli@unito.it, ORCID: 0000-0002-5181-5096.}\par \bigskip
		
		\textsuperscript{2} Dipartimento di Matematica e Informatica, Università degli Studi di Palermo, via Archirafi 34, 90123 Palermo, Italy. \\ \email{manuel.mancini@unipa.it, ORCID: 0000-0003-2142-6193.}\par \bigskip
		
		\textsuperscript{3} Institut de Recherche en Mathématique et Physique, \\Université catholique de Louvain, \\chemin du cyclotron 2 bte L7.01.02, B--1348 Louvain-la-Neuve, Belgium. \\ \email{manuel.mancini@uclouvain.be}\par \bigskip
		
		\textsuperscript{4} Dipartimento di Scienze per gli Alimenti, la Nutrizione e l'Ambiente,\\Università degli Studi di Milano Statale,\\ via Luigi Mangiagalli 25, 20133 Milano, Italy.\\
		\email{giuseppe.metere@unimi.it, ORCID: 0000-0003-1839-3626.}\par \bigskip
	\end{center}

\begin{abstract}
	In a recent paper, motivated by the study of central extensions of associative algebras, G.~Janelidze introduces the notion of  weakly action representable category. In this paper, we show that the category of Leibniz algebras is weakly action representable and we characterize the class of acting morphisms. Moreover, we study the representability of actions of the category of Poisson algebras by describing explicitly a universal strict general actor.
\end{abstract}

\bigskip

\section*{Introduction}

\bigskip

Internal object actions were defined in \cite{IntAct} by F.~Borceux, G.~Janelidze and G.~M.~Kelly in order to recapture categorically several algebraic notions of action, such as the action of a group $G$ on another group $H$, the action of a Lie algebra $\mg$ on another Lie algebra $\mh$ and so on. In the same paper, the authors introduced the notion of \emph{representable action}: an object $X$ has representable actions if the functor $\Act(-,X)$, sending each object $B$ to the set of actions of $B$ on $X$, is representable (see \Cref{sec:Preliminaries} for further details). In \cite{ActRepr}, action representability was extensively studied in the semi-abelian context and it was proved that, for example, the category of commutative associative algebras over a field is not action representable.

In \cite{act_accessible} D.~Bourn and G.~Janelidze introduced the weaker notion of \emph{action accessible} category in order to include relevant examples that do not fit in the frame of action representable categories (such as rings, associative algebras and Leibniz algebras amongst others). A.~Montoli proved in \cite{Montoli} that all \emph{categories of interest} in the sense of G.~Orzech \cite{Orzech} are action accessible. On the other hand, the paper \cite{Casas} by J.~M.~Casas, T.~Datuashvili and M.~Ladra showed that a weaker notion of actor (namely, the universal strict general actor, USGA for short) is available for any category of interest $\cC$. 

Recently, G.~Janelidze introduced in \cite{WAR} the notion of \emph{weakly representable} action: for an object $X$ in a semi-abelian category $\cC$, a weak representation of the functor $\Act(-,X)$ is a pair $(T,\tau)$, where $T$ is an object of $\cC$ and  $\tau\colon\Act(-,X)\rightarrowtail\Hom_\cC(-,T)$ is a monomorphism of functors. When such monomorphism exists, one says that $X$ has weakly representable actions and $T$ is a \emph{weak actor} of $X$. In particular, when $\cC$ is a category of interest and $\operatorname{USGA}(X)$ is an object of $\cC$, then $\Act(-,X)$ has a weak representation (see \Cref{USGA2}).  

A semi-abelian category $\cC$ is said to be \emph{weakly action representable} if every object $X$ in $\cC$ has a weak representation of actions. This is true, for instance, for the category $\mathbf{AAlg}_{\bF}$ of associative algebras over a field $\bF$ \cite{WAR}. Notice that a category of interest needs not necessarily be weakly action representable, as observed by J.~R.~A.~Gray in \cite{Gray}.
However, thanks to the results of \cite{Casas}, we get that, for every object $X$ in a category of interest $\cC$, there exists a monomorphism of functors $\Act(-,X)\rightarrowtail\Hom_{\cC_G}(U(-)),\operatorname{USGA}(X))$, where $\cC_G$ is a suitable category containing $\cC$ as a full subcategory (see \Cref{USGA}) and $U \colon \cC \rightarrow \cC_G$ denotes the forgetful functor.

We analyze in details two specific cases: the category $\mathbf{LeibAlg}_{\bF}$ of Leibniz algebras (\Cref{sec:Leibniz}) and the category $\mathbf{PoisAlg}_{\bF}$ of Poisson algebras (\Cref{sec:Poisson}), where $\bF$ is a fixed field with $\operatorname{char}(\bF)\neq 2$. We show that the first one is a weakly action representable category and we provide a complete description of \emph{acting morphisms}, i.e.\ morphisms into a weak actor corresponding to internal actions, in this case and for associative algebras. Moreover, we study the representability of actions of the category $\Pois_{\bF}$ by describing explicitly a universal strict general actor $[V]=\operatorname{USGA}(V)$, for any Poisson algebra $V$, and the corresponding monomorphism of functors  
$$
\tau \colon \Act(-,V)\rightarrowtail\Hom_{\NAlg_{\bF}^2}(U(-),[V]),
$$
where $\NAlg_{\bF}^2$ is the category of algebras over $\bF$ with two not necessarily associative bilinear operations and $U \colon \Pois_{\bF} \rightarrow \NAlg_{\bF}^2$ denotes the forgetful functor. Finally, we extend this study to the subvariety $\CPoisAlg_{\bF}$ of commutative Poisson algebras and we prove that there exists a natural isomorphism
$$
\Act(-,V) \cong \Hom_{\NAlg_{\bF}^2}(\tilde{U}(-),[V]_c),
$$
for any commutative Poisson algebra $V$, where $\tilde{U} \colon \CPoisAlg_{\bF} \rightarrow \NAlg_{\bF}^2$ denotes the forgetful functor and $[V]_c$ is the universal strict general actor of $V$ in the category $\CPoisAlg_{\bF}$.

\bigskip

\section{Preliminaries}\label{sec:Preliminaries}

\bigskip

The notion of semi-abelian category was introduced in \cite{Semi-Ab} by G.~Janelidze, L.~Márki and W.~Tholen in order to provide a categorical setting which would capture  algebraic properties of groups, rings and algebras. Let us recall that a category~$\cC$ is semi-abelian when it is finitely complete, Barr-exact, pointed, protomodular and has finite coproducts. 

One notion which is central in the present article is that of split extension. Let~$X,B$ be objects of a semi-abelian category $\cC$; a \emph{split extension} of $B$ by $X$ is a diagram

\begin{equation}\label{eq:split_ext}
\begin{tikzcd}
0\ar[r]
&X\arrow [r, "k"]
&A \arrow[r, shift left, "\alpha"] & 
B\ar[r]\ar[l, shift left, "\beta"]
&0 
\end{tikzcd}	
\end{equation}
in $\cC$ such that $\alpha \circ \beta = \id_B$ and $(X,k)$ is a kernel of $\alpha$. Notice that protomodularity implies that the pair $(k,\beta)$ is jointly strongly epic, $\alpha$ is indeed the cokernel of $k$ and diagram (\ref{eq:split_ext}) represents an extension of $B$ by $X$ in the usual sense. Morphisms of split extensions are morphisms of extensions that commute  with the sections. Let us observe that, again by protomodularity, a morphism of split extensions fixing $X$ and $B$ is necessarily an isomorphism.
For  an object $X$ of $\cC$, we  define the functor
\[
\SplExt(-,X)\colon \cC^{\operatorname{op}} \rightarrow \textbf{Set}
\]
which assigns to any object $B$ of $\cC$, the set $\SplExt(B,X)$ of isomorphism classes of split extensions of $B$ by $X$, and to any arrow $f\colon B'\to B$ the \emph{change of base } function  $f^*\colon \SplExt(B,X) \to \SplExt(B',X)$ given by pulling back along $f$.

A feature of semi-abelian categories is that one can define a notion of internal action. If we fix an object $X$, actions on $X$ give rise to a functor 
\[
\Act(-,X)\colon \cC^{\operatorname{op}} \rightarrow \textbf{Set}.
\]
In fact, we will not describe explicitly internal actions, since there is a natural isomorphism of functors $\Act(-,X)\cong \SplExt(-,X)$, and split extensions are more handy to work with (we refer the interested reader to \cite{ActRepr}, where this isomorphism is described in detail). This justifies the terminology in the definition that follows. 
\begin{definition}
A semi-abelian category $\cC$ is \emph{action representable} if for every object $X$ in $\cC$, the functor $\SplExt(-,X)$ is representable. This means that there exists an object $[X]$ of $\cC$, called the \emph{actor} of $X$, and a natural isomorphism
$$
\SplExt(-,X) \cong \Hom_{\cC}(-,[X]).
$$
\end{definition}
The prototype examples of action representable categories are the category $\Grp$ of groups and the category $\LieAlg_{R}$ of  Lie algebras over a commutative ring $R$. In the first case, it is well known that every split extension of $B$ by $X$ is represented by a homomorphism $B \rightarrow \Aut(X)$, where the actor $\Aut(X)$  of $X$ is the group of automorphisms of the group $X$. In the case of Lie algebras, a split extension of $B$ by $X$ is represented by a homomorphism $B \rightarrow \Der(X)$, where $\Der(X)$ is the Lie algebra of derivations of $X$. Therefore, $\Der(X)$ is the actor of $X$.

However the notion of action representable category has proven to be quite restrictive. For instance, in \cite{Tim} the authors proved that, if a variety $\cV$ of non-associative algebras (over an infinite field $\bF$ with $\operatorname{char}(\bF)\neq 2$) is action representable, then $\cV=\LieAlg_{\bF}$.

In \cite{WAR} G.\ Janelidze introduced a weaker notion for the representability of actions in a semi-abelian category $\cC$.

\begin{definition}
	A semi-abelian category $\cC$ is \emph{weakly action representable} if for every objext $X$ in $\cC$, the functor $\SplExt(-,X)$ admits a weak representation. This means that  there exist an object $T$ of  $\cC$ and a monomorphism  of functors
	\[
	\tau\colon \SplExt(-,X) \rightarrowtail \Hom_{\cC}(-,T).
	\]
	An object $T$ as above is called \emph{weak actor} of $X$;  a morphism $\varphi\colon B \rightarrow T \in \Imm(\tau_B)$ is called \emph{acting morphism}.
\end{definition}

Notice that every action representable category $\cC$ is weakly action representable. In this case, $T=[X]$ is the actor of $X$, $\tau$ is a natural isomorphism and every arrow $\varphi\colon B \rightarrow [X]$ is an acting morphism.

\subsection{Associative Algebras}
The case of associative algebras over a field $\bF$ is studied in \cite{WAR}: the category $\Ass_{\bF}$ of associative algebras over $\bF$ is weakly action representable.  Let us recall the basic constructions.  

Given an associative algebra $X$, a weak actor of $X$ is the associative algebra
\[
\Bim(X)=\lbrace (f*-,-*f) \in \End(X)\times \End(X)^{\text{op}}\; \vert\cdots\qquad\qquad\qquad\qquad\qquad\qquad
\]
\[
\qquad\qquad\qquad  \cdots\vert \;f*(xy)=(f*x)y, (xy)*f=x(y*f), x(f*y)=(x*f)y, \; \forall x,y \in X \rbrace
\]
of \emph{bimultipliers} of $X$ (see \cite{MacLane58}, where they are called \emph{bimultiplications}). Moreover, the isomorphism classes of split extensions of an associative algebra $B$ by $X$ are in bijection with the class of morphisms
\[
B \rightarrow \Bim(X), \; \;  a \mapsto (a*-,-*a),\; \; \forall a \in B,
\]
which satisfy the condition
\begin{equation}\label{eq:acting_morph}
a*(x*b)=(a*x)*b, \; \; \forall a,b \in B, \; \forall x \in X,
\end{equation}
i.e.\ the left multiplier $a * -$ and the right multiplier $-*b$ are permutable.  Notice that $(a*-,-*a)$ can be considered  respectively the left and the right components of the action of $a \in B$ on $X$.

\Cref{eq:acting_morph} can be used to characterize the class of acting morphisms in the category $\Ass_{\bF}$.\ In \cite{MacLane58} S.\ Mac Lane described, for a ring $\Lambda$, the $\Lambda-$bimodule structures over an abelian group $K$ in terms of ring morphisms from $\Lambda$ to the ring of bimultipliers of $K$. The following is a straightforward generalization to actions on an object which is not necessarily abelian. 

\begin{prop}
	Let $B$ and $X$ be associative algebras over $\bF$ and let 
\[
\varphi \in \Hom_{\Ass_{\bF}}(B,\Bim(X))
\]
defined by
\[
\varphi(a)=(a*_{\varphi}-,-*_{\varphi}a), \; \; \forall a \in B.
\]
	Then $\varphi$ is an \emph{acting morphism} if and only if
	\[
	a*_{\varphi}(x*_{\varphi}b)=(a*_{\varphi}x)*_{\varphi}b,
	\]
    for every $a,b \in B$ and for every $x \in X$.
\end{prop}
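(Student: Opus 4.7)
The plan is to unpack, in the associative setting, the natural transformation
$$
\tau\colon\SplExt(-,X)\rightarrowtail\Hom_{\Ass_{\bF}}(-,\Bim(X))
$$
of \cite{WAR} and show that its image on $B$ consists precisely of those $\varphi$ that satisfy \eqref{eq:acting_morph}. Concretely, $\tau_B$ sends the class of a split extension \eqref{eq:split_ext} to $\varphi(b):=(b*-,-*b)$, where $b*x:=\beta(b)\cdot x$ and $x*b:=x\cdot\beta(b)$, the dot being multiplication in $A$; the pair $(b*-,-*b)$ is a bimultiplier by associativity in $A$, and since $\beta$ is itself an algebra homomorphism (being a morphism in $\Ass_{\bF}$), $\varphi$ is multiplicative.

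The forward implication then reduces to a one-line associativity calculation: if $\varphi$ arises from such a split extension, then
$$
a*_{\varphi}(x*_{\varphi}b)=\beta(a)\cdot(x\cdot\beta(b))=(\beta(a)\cdot x)\cdot\beta(b)=(a*_{\varphi}x)*_{\varphi}b.
$$

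For the converse, given $\varphi\colon B\to\Bim(X)$ satisfying \eqref{eq:acting_morph}, I would produce an explicit split extension realising $\varphi$ by putting on the vector space $X\oplus B$ the multiplication
$$
(x_1,b_1)(x_2,b_2):=\bigl(x_1x_2+b_1*_{\varphi}x_2+x_1*_{\varphi}b_2,\,b_1b_2\bigr),
$$
and then taking the projection onto $B$, its linear section $b\mapsto(0,b)$, and the inclusion $X\hookrightarrow X\oplus B$ as kernel. By construction, the image under $\tau_B$ of this split extension is the original $\varphi$, provided the product above is associative.

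The main obstacle is thus the associativity check on $X\oplus B$, which I would split according to how many of the three entries lie in $B$ and how many in $X$. Triples entirely inside $X$ or entirely inside $B$ are inherited; triples with exactly one entry in $B$ reduce, depending on its position, to the three defining identities of $\Bim(X)$; triples with two consecutive entries in $B$ and one in $X$ amount to $\varphi$ being a homomorphism into $\Bim(X)$; finally, the \emph{sandwich} triple $((0,a),(x,0),(0,b))$ collapses to exactly
$$
(a*_{\varphi}x)*_{\varphi}b=a*_{\varphi}(x*_{\varphi}b),
$$
which is precisely \eqref{eq:acting_morph}. Each case is mechanical; only the last one uses the hypothesis, and it uses it in an essential way.
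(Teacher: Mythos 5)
Your argument is correct and follows essentially the same route as the paper: both directions rest on identifying $\tau_B$ with $a\mapsto(\beta(a)\cdot_A-,-\cdot_A\beta(a))$, deriving the permutability condition from associativity of $A$, and conversely realising a permutable $\varphi$ via the semi-direct product $B\ltimes X$ (the paper delegates the associativity check of that product to the proof of Proposition 2.1 of \cite{ActRepr}, whereas you carry out the case analysis explicitly, correctly isolating the sandwich case $(a,x,b)$ as the only one needing the hypothesis).
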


\begin{proof}
 We recall from \cite{WAR} that a weak representation of an associative algebra $X$ is given by a pair $(\Bim(X),\tau)$, where 
 \[
 \tau\colon \SplExt(-,X) \rightarrowtail \Hom_{\Ass_{\bF}}(-,\Bim(X))
 \]
 is the monomorphism of functors which associate with any split extension $A$ of $B$ by $X$, as in diagram (\ref{eq:split_ext}), the morphism $\varphi \colon B \rightarrow \Bim(X)$ defined by
 \[
 \varphi(a) = (a*_{\varphi}-,-*_{\varphi}a)=(\beta(a) \cdot_A -, - \cdot_A \beta(a)),
 \]
 for every $a \in B$. It follows from the associativity of the algebra $A$ that the left multiplier $a*_{\varphi}-$ and the right multiplier $-*_{\varphi}b$ are permutable, for every $a,b \in B$. Conversely, with any morphism  $\varphi \colon B \rightarrow \Bim(X)$ satisfying 
 \[
 a*_{\varphi}(x*_{\varphi}b)=(a*_{\varphi}x)*_{\varphi}b, \; \; \forall a,b \in B, \; \forall x \in X,
 \]
we can associative the split extension of $B$ by $X$ given by the semi-direct product $B \ltimes X$, as in the proof of \cite[Proposition 2.1]{ActRepr}, i.e.\  $\varphi \in \Imm(\tau_B)$.

\end{proof}

\subsection{Jordan Algebras}

An example of variety of non-associative algebras over a field $\bF$ which is not a \emph{weakly action representable category} is given by \emph{Jordan algebras}. Recall that a \emph{Jordan algebra} over a field $\bF$ is a non-associative commutative algebra $(J, \cdot)$ over $\bF$ which satisfies the \emph{Jordan identity}
\[
(xy)(xx)=x(y(xx)), \; \; \forall x,y \in J.
\]

In \cite{WAR} G.\ Janeldize showed that every weakly action representable category is action accessible (see \cite{act_accessible}). In fact the variety $\textbf{JordAlg}_{\bF}$ of Jordan algebras over $\bF$ is not action accessible (see \cite{Cigoli}), hence it is not weakly action representable.

\bigskip

\section{Leibniz Algebras}\label{sec:Leibniz}

\bigskip

We assume that $\bF$ is a field with $\operatorname{char}(\bF)\neq2$.

\begin{definition}[\cite{loday1993version}]
	A \emph{(right) Leibniz algebra} over $\mathbb{F}$ is a vector space $\mg$ over $\mathbb{F}$ endowed with a bilinear map (called $commutator$ or $bracket$) $\left[-,-\right]\colon \mg\times \mg \rightarrow \mg$ which satisfies the \emph{(right) Leibniz identity}
\[
	\left[\left[x,y\right],z\right]=\left[[x,z],y\right]+\left[x,\left[y,z\right]\right], \;\;\forall x,y,z\in \mg.
\]
	
\end{definition}
	
	Every Lie algebra is a Leibniz algebra and every Leibniz algebra with skew-symmetric commutator is a Lie algebra. In fact, the full inclusion $i\colon \LieAlg_{\bF}\rightarrow \LeibAlg_{\bF}$ has a left adjoint $\pi\colon \LeibAlg_{\bF} \rightarrow \LieAlg_{\bF}$ that associates, with every Leibniz algebra $\mg$, its quotient $\mg/\mg^{\operatorname{ann}}$, where $\mg^{\operatorname{ann}}=\langle \left[x,x\right] |\,x\in \mg \rangle$ is the \emph{Leibniz kernel} of $\mg$. Note that  $\mg^{\operatorname{ann}}$ is an abelian algebra. 
	
	We define the left and the right center of a Leibniz algebra
\[
	\operatorname{Z}_l(\mg)=\left\{x\in \mg \,|\,\left[x,\mg\right]=0\right\},\,\,\, \operatorname{Z}_r(\mg)=\left\{x\in \mg \,|\,\left[\mg,x\right]=0\right\}
\]
	and we observe that they coincide when $\mg$ is a Lie algebra. The \emph{center} of $\mg$ is $\operatorname{Z}(\mg)=\operatorname{Z}_l(\mg)\cap \operatorname{Z}_r(\mg)$. In general $\operatorname{Z}_r(\mg)$ is an ideal of
	$\mg$, while the left center may not even be a subalgebra.
	
\subsection{Derivations and Biderivations}
	
	The definition of \emph{derivation} is the same as in the case of Lie algebras.
	\begin{definition}
		Let $\mg$ be a Leibniz algebra over $\bF$. A \emph{derivation} of $\mg$ is a linear map $d\colon \mg \rightarrow \mg$ such that 
		\[
		d(\left[x,y\right])=\left[d(x),y\right]+\left[x,d(y)\right],\,\;\; \forall x,y\in \mg.
		\]
	\end{definition}
	
	The right multiplications of $\mg$ are particular derivations called \emph{inner derivations} and an equivalent way to define a Leibniz algebra is to say that the (right) adjoint map $\operatorname{ad}_x=\left[-,x\right]$ is a derivation, for every $x\in \mg$. On the other hand the left adjoint maps are not derivations in general. 
	
	With the usual bracket \hbox{$\left[d_1,d_2\right]=d_1\circ d_2 - d_2\circ d_1$}, the set $\Der(\mg)$ is a Lie algebra and the set  $\Inn(\mg)$ of all inner derivations of $\mg$ is an ideal of $\Der(\mg)$. Furthermore, $\Aut(\mg)$ is a Lie group and the associated Lie algebra is $\Der(\mg)$.
	
	The definitions of \emph{anti-derivation} and \emph{biderivation} for a Leibniz algebra were introduced by J.-L. Loday in \cite{loday1993version}.
	
	\begin{definition}
	An \emph{anti-derivation} of a Leibniz algebra $\mg$ is a linear map $D\colon \mg \rightarrow \mg$ such that
\[
	D([x,y])=[D(x),y]-[D(y),x], \; \; \forall x,y \in \mg.
\]
	\end{definition}
   \noindent
 
 One can check that, for every $x \in \mg$, the left multiplication $\operatorname{Ad}_x=[x,-]$ defines and anti-derivation. We observe that in the case of Lie algebras, there is no difference between a derivation and an anti-derivation.
    
 \begin{rem}
    	The set of anti-derivations of a Leibniz algebra $\mg$ has a $\Der(\mg)$-module structure with the multiplication
 \[
    	d \cdot D \coloneq [D,d]= D \circ d - d \circ D,
 \]
    	for every $d \in \Der(\mg)$ and for every anti-derivation $D$.
 \end{rem}
 
 \begin{definition}
 	Let $\mg$ be a Leibniz algebra. A \emph{biderivation} of $\mg$ is a pair $(d,D)$ where $d$ is a derivation and $D$ is an anti-derivation, such that
 \begin{equation*}
 		[x,d(y)]=[x,D(y)], \; \; \forall x,y \in \mg.
 \end{equation*}
 \end{definition}

The set of all biderivations of $\mg$, denoted by $\Bider(\mg)$, has a Leibniz algebra structure with the bracket
\[
[(d,D),(d',D')]=(d \circ d' - d' \circ d, D \circ d' - d' \circ D), \; \; \forall (d,D),(d,D') \in \operatorname{Bider}(\mg)
\]
and it is possibile to define a Leibniz algebra morphism
\[
\mg \rightarrow \Bider(\mg)
\]
by
\[
x \mapsto (-\operatorname{ad}_x, \operatorname{Ad}_x), \; \; \forall x \in \mg.
\]
The pair $(-\operatorname{ad}_x, \operatorname{Ad}_x)$ is called \emph{inner biderivation} of $\mg$ and the set of all inner biderivations forms a Leibniz subalgebra of $\Bider(\mg)$. We refer the reader to \cite{MM} for a complete classification of the Leibniz algebras of biderivations of low-dimensional Leibniz algebras over a general field $\bF$ with $\operatorname{char}(\bF) \neq 2$.
	
\subsection{Split Extensions of Leibniz algebras}

By studying biderivations of a Leibniz algebra $\mh$, we can classify the split extensions with kernel $\mh$. This relies on the correspondence between actions and split extensions available in any semi-abelian category, as explained in \Cref{sec:Preliminaries}. Since the variety of Leibniz algebra is a category of interest (see \cite{Orzech}), it is convenient here to describe \emph{internal actions} in terms of the so-called \emph{derived actions}.
    
\begin{definition}\label{Rem:SplitLeib}
Let 
\begin{equation}\label{diag:SplLeib}
		\begin{tikzcd}
			0\ar[r]
			&\mh \arrow [r, "i"]
			&\hat{\mg} \arrow[r, shift left, "\pi"] & 
			\mg \ar[r]\ar[l, shift left, "s"]
			&0 
		\end{tikzcd}	
\end{equation}
	be a split extension of Leibniz algebras. The pair of bilinear maps 
		\[
	l\colon \mg \times \mh \rightarrow \mh, \; \; \; r\colon \mh \times \mg \rightarrow \mh
	\]
	defined by
	\[
	l_x(b)=[s(x),i(b)]_{\hat{\mg}}, \; \; r_y(a)=[i(a),s(y)]_{\hat{\mg}}, \; \; \forall x,y \in \mg, \; \forall a,b \in \mh,
	\]
	where $l_x=l(x,-)$ and $r_y=r(-,y)$, is called the \emph{derived action} of $\mg$ on $\mh$ associated with \eqref{diag:SplLeib}.
\end{definition}

Given a pair of bilinear maps 
\[
l\colon \mg \times \mh \rightarrow \mh, \; \; \; r\colon \mh \times \mg \rightarrow \mh,
\]
one can define a bilinear operation on the direct sum of vector spaces $\mg \oplus \mh$
\[
[(x,a),(y,b)]_{(l,r)}=([x,y]_{\mg},[a,b]_{\mh}+l_x(b)+r_y(a)), \; \; \forall (x,a),(y,b) \in \mg \oplus \mh.
\]
By Theorem 2.4 in \cite{Orzech}, this defines a Leibniz algebra structure on $\mg \oplus \mh$ if and only if the pair $(l,r)$ is a derived action of $\mg$ on $\mh$. This in turn is equivalent to a set of conditions on the pair $(l,r)$, as explained in the following proposition, which is a special case of Proposition 1.1 in \cite{Datuashvili}.

\begin{prop}\label{PropLeib}
$(\mg \oplus \mh, [-,-]_{(l,r)})$ is a Leibniz algebra if and only if
\begin{enumerate}
	\item[(L1)] $r_x([a,b])=[r_x(a),b]+[a,r_x(b)]$;
	\item[(L2)] $l_x([a,b])=[l_x(a),b]-[l_x(b),a]$;
	\item[(L3)] $[a,r_x(b)+l_x(b)]=0$;
	\item[(L4)] $r_{[x,y]}=[r_y,r_x]=r_y \circ r_x - r_x \circ r_y$;
	\item[(L5)] $l_{[x,y]}=[r_y,l_x]=r_y \circ l_x - l_x \circ r_y$;
	\item[(L6)] $l_x \circ (l_y + r_y)=0$;
\end{enumerate}
for every $x,y \in \mg$ and for every $a,b \in \mh$. The resulting Leibniz algebra is the \emph{semi-direct product} of $\mg$ and $\mh$ and it is denoted by $\mg \ltimes \mh$.
\end{prop}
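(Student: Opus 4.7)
The plan is to substitute the definition of $[-,-]_{(l,r)}$ into both sides of the right Leibniz identity
\[
[[U,V]_{(l,r)},W]_{(l,r)} = [[U,W]_{(l,r)},V]_{(l,r)} + [U,[V,W]_{(l,r)}]_{(l,r)}
\]
for $U=(x,a)$, $V=(y,b)$, $W=(z,c)$ in $\mg \oplus \mh$, and then to project the resulting equality onto the two components of the direct sum. The $\mg$-component coincides verbatim with the Leibniz identity on $\mg$ and is therefore automatic.

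All of the content is in the $\mh$-component, which after full expansion becomes an equality of multilinear expressions in the six variables $x,y,z\in\mg$ and $a,b,c\in\mh$, each monomial involving at most two of $x,y,z$. By multilinearity, this equality decomposes according to which subset of $\{x,y,z\}$ appears in each monomial, and I would argue case by case. The terms involving none of $x,y,z$ reproduce the Leibniz identity on $\mh$. The three single-$\mg$-variable cases yield (L2) (only $x$ appears) and (L1) (only $z$) directly, while the case of only $y$, once combined with (L1), gives (L3). The three pair cases produce (L4) ($\{y,z\}$), (L5) ($\{x,z\}$) and, once combined with (L5), (L6) ($\{x,y\}$). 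Since each implication is obtained by isolating a multilinear monomial, the argument is reversible: if one assumes (L1)--(L6) together with the Leibniz identities on $\mg$ and $\mh$, substituting back into the expansion shows that every monomial cancels.

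The main obstacle is purely bookkeeping: one must expand roughly a dozen terms on each side of the identity and group them by the above classification without losing any of them or mishandling signs. Since the computation is routine and the statement is the specialization of Proposition~1.1 of \cite{Datuashvili} to the variety of Leibniz algebras (which, by Theorem~2.4 of \cite{Orzech}, correctly encodes the semi-direct product in the category of interest $\LeibAlg_{\bF}$), I would present the grouping of terms in a short table and refer to those sources for the remaining detail.
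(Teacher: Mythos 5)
Your proposal is correct, but note that the paper does not actually prove this proposition: it presents it as an instance of Theorem~2.4 of Orzech together with Proposition~1.1 of Datuashvili, i.e.\ the general description of derived actions in a category of interest, and offers no computation. You instead verify the statement directly, and your bookkeeping scheme is sound. The cleanest way to make the ``decomposition by multilinearity'' rigorous is to observe that the Leibniz identity is trilinear in $U,V,W$, so it suffices to test it on the spanning set $\mg\cup\mh$ of $\mg\oplus\mh$; the resulting eight cases are exactly your monomial classes, and I checked that they come out as you claim: all three entries in $\mh$ gives the Leibniz identity on $\mh$; the single-variable cases give (L2) (only $x$), (L1) (only $z$), and $[r_y(a),c]=r_y([a,c])+[a,l_y(c)]$ (only $y$), which modulo (L1) is (L3); the pair cases give (L4) for $\{y,z\}$, (L5) for $\{x,z\}$, and $l_{[x,y]}=r_y\circ l_x+l_x\circ l_y$ for $\{x,y\}$, which modulo (L5) is (L6); the all-$\mg$ case is the Leibniz identity on $\mg$ and lands in the $\mg$-component only. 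Since each case is an equivalence given the previously used ones, the converse direction follows by the same trilinearity argument, as you say. The trade-off between the two routes is the usual one: your direct expansion is elementary and self-contained (and would let a reader check the signs in (L2), (L5), (L6), which are the easiest to get wrong), whereas the paper's citation is shorter and places the proposition inside the general machinery of derived actions for categories of interest, which the paper then reuses verbatim for Poisson algebras in Section~4.
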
 

\begin{rem}
	Notice that, for any split extension \eqref{diag:SplLeib} and the corresponding derived action $(l,r)$, there is an isomorphism of Leibniz algebra split extensions
	\begin{equation*}\label{iso}
		\begin{tikzcd} 
			0\ar[r] 
			&\mh \arrow [r, "i_2"] \ar[d, "\id_{\mh}"'] 
			&\mg \ltimes \mh \arrow[r, shift left, "\pi_1"] \ar[d, "\theta"]& 
			\mg \ar[r]\ar[l, shift left, "i_1"]\ar[d, "\id_{\mg}"] 
			&0 \\
			0\ar[r] 
			&\mh \arrow [r, "i"] 
			&\hat{\mg} \arrow[r, shift left, "\pi"] & 
			\mg \ar[r]\ar[l, shift left, "s"] 
			&0 
		\end{tikzcd} 
	\end{equation*}
where $i_1,i_2,\pi_1$ are the canonical injections and projection and $\theta \colon \mg \ltimes \mh \rightarrow \hat{\mg}$ is defined by $\theta(x,a)=s(x)+i(a)$, for every $(x,a) \in \mg \oplus \mh$.
\end{rem}

\begin{rem}\label{semidirect}
The first three equations of \Cref{PropLeib} state that, for every $x \in \mg$, the pair
$$
(-r_x,l_x)
$$
is a biderivation of the Leibniz algebra $\mh$. Moreover, from the equalities (L4)-(L5), we have that the linear map
$$
\varphi\colon \mg \rightarrow \Bider(\mh)
$$
defined by
$$
\varphi(x)=(-r_x,l_x), \; \; \forall x \in \mg 
$$
is a Leibniz algebra morphism. Indeed
	$$
	\varphi([x,y]_{\mg})=(-r_{[x,y]_{\mg}},l_{[x,y]_{\mg}})=(-[r_y,r_x],[r_y,l_x])
	$$
	and
	$$
	[\varphi(x),\varphi(y)]_{\Bider(\mh)}=[(-r_x,l_x),(-r_y,l_y)]_{\Bider(\mh)}=([-r_x,-r_y],[l_x,-r_y])=
	$$
	$$
	=([r_x,r_y],-[l_x,r_y])=(-[r_y,r_x],[r_y,l_x]).
	$$
\end{rem}

On the other hand, given a Leibniz algebra morphism 
$$
\varphi\colon \mg \rightarrow \Bider(\mh)
$$
with notation
$$
\varphi(x)=([\![-,x]\!],[\![x,-]\!]), \; \; \forall x \in \mg,
$$
satisfying
$$
[\![x,[\![y,a]\!]-[\![a,y]\!]]\!]=0, \; \; \forall x,y \in \mg, \; \forall a \in \mh,
$$
we can associate the split extension
\begin{equation*}
	\begin{tikzcd}
		0\ar[r]
		&\mh \arrow [r, "i"]
		&(\mg \oplus \mh, [-,-]_{\varphi}) \arrow[r, shift left, "\pi"] & 
		\mg \ar[r]\ar[l, shift left, "s"]
		&0 
	\end{tikzcd}	
\end{equation*}
where the Leibniz algebra structure of $\mg \oplus \mh$ is given by
$$
	[(x,a),(y,b)]_{\varphi}=([x,y]_{\mg},[a,b]_{\mh}+[\![x,b]\!]-[\![a,y]\!]), \; \; \forall (x,a),(y,b) \in \mg \oplus \mh.
$$
However a generic morphism from $\mg$ to $\Bider(\mathfrak{h})$ needs not give rise to a split extension, as the following example shows. 

\begin{ex}[\cite{Metere}]
	Let $\mg=\bF$ be the abelian one-dimensional algebra. Then the morphism $\varphi\colon \bF \rightarrow \Bider(\bF)=\End(\bF)^2$ defined by
	$$
	\varphi(a)=(d_a,D_a),
	$$
	where
	$$
	d_a(x)=-ax, \; D_a(x)=ax, \; \; \forall a,x \in \bF
	$$ 
	does not define a split extension of $\bF$ by itself. Indeed in general
	$$
	D_a(D_b(x)-d_b(x))=a(bx-(-bx))=2abx \neq 0.
	$$
\end{ex}

\begin{ex} \textbf{The (bi-)adjoint extension}\\
	Let $\mg$ be a Leibniz algebra and consider the canonical action of $\mg$ on itself given by the pair of linear maps
	$$
	r_x=\operatorname{ad}_x=[-,x], \; \; \forall x \in \mg, 
	$$ 
	$$
	l_y=\operatorname{Ad}_y=[y,-], \; \; \forall y \in \mg.
	$$
	We have a split extension of $\mg$ by itself with associated morphism
	$$
	\mg \rightarrow \Bider(\mg)
	$$
	defined by
	$$
	x \rightarrow (-\operatorname{ad}_x,\operatorname{Ad}_x), \; \; \forall x \in \mg,
	$$
	which obviously satisfies the condition
	$$
	\operatorname{Ad}_x \circ (\operatorname{Ad}_y+\operatorname{ad}_y)=0, \; \; \forall x,y \in \mg.
	$$
	Indeed, for every $z \in \mg$
	$$
	[x,[y,z]+[z,y]]=[x,[y,z]]+[x,[z,y]]=
	$$
	$$
	=[[x,y],z]-[[x,z],y]+[[x,z],y]-[[x,y]z]=0
	$$
	Thus the Leibniz algebra morphism which defines the inner biderivations of $\mg$ is associated with the canonical (bi-)adjoint extension of $\mg$ by itself.
\end{ex}

\begin{ex}
	Let $\mh$ be a Leibniz algebra. It is well known that (see \cite{Casas} for more details), if $\mh$ has trivial center (i.e.\ $\operatorname{Z}(\mh)=0$) or if $\mh$ is \emph{perfect} (which means that $[\mh,\mh]=\mh$), then for every $(d,D),(d',D') \in \Bider(\mh)$ we have
	$$
	D(D'(x)-d'(x))=0, \; \; \forall x \in \mh.
	$$
	Thus, given any Leibniz algebra $\mg$, we can associate a split extension of $\mg$ by $\mh$ with any morphism
	$$
	\mg \rightarrow \Bider(\mh)
	$$
	and $\Bider(\mh)$ is the actor of $\mh$.
\end{ex}

\begin{rem}
	Let $\mg$ and $\mh$ be Lie algebras and let $\hat{\mg}$ be a Lie algebra split extension of $\mg$ by $\mh$. Then, as observed above, we have that
	$$
	\hat{\mg} \cong (\mg \oplus \mh, [-,-]_r),
	$$
	where the Lie bracket is defined by
	$$
	[(x,a),(y,b)]_r=([x,y]_{\mg},[a,b]_{\mh}-r_x(b)+r_y(a)), \; \; \forall (x,a),(y,b) \in \mg \oplus \mh.
	$$
	In this case the left component of the action of $\mg$ on $\mh$ is defined by 
	$$
	l_x(b)=-r_x(b), \; \; \forall x \in \mg, \; \forall b \in \mh,
	$$
	thus the equation (L6) is automatically satisfied and every morphism
	$$
	\mg \rightarrow \Bider(\mh), \; \; x \mapsto ([\![-,x]\!],[\![-,x]\!]), \; \; \forall x \in \mg
	$$
	represents a split extension of $\mg$ by $\mh$ in the category $\LieAlg_{\bF}$. Moreover the subalgebra $\lbrace (d,d) \; | \; d \in \Der(\mh) \rbrace$ of $\Bider(\mh)$ is a Lie algebra isomorphic to $\Der(\mh)$.
\end{rem}

We can now claim the following result.

\begin{thm}
Let $\mg$ and $\mh$ be Leibniz algebras over $\bF$.
\begin{itemize}
\item[(i)] The isomorphism classes of split extensions of $\mg$ by $\mh$ are in bijection with the Leibniz algebra morphisms
$$
\varphi\colon \mg \rightarrow \Bider(\mh), \; \; \varphi(x)=([\![-,x]\!],[\![x,-]\!]), \; \; \forall x \in \mg,
$$
which satisfy the condition
\begin{equation}\label{condition_Leibniz}
	[\![x,[\![y,a]\!]-[\![a,y]\!]]\!]=0, \; \; \forall x,y \in \mg, \; \forall a \in \mh.
\end{equation}

\item[(ii)] The category $\LeibAlg_{\bF}$ of Leibniz algebras over $\bF$ is weakly action representable.

\item[(iii)] A weak actor of an object $\mh$ in $\LeibAlg_{\bF}$ is the Leibniz algebra $\Bider(\mh)$.

\item[(iv)] $\varphi \in \Hom_{\LeibAlg_{\bF}}(\mg,\Bider(\mh))$ is an acting morphism if and only if it satisfies  condition $(\ref{condition_Leibniz})$.
\end{itemize}
\end{thm}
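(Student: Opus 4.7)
The plan is to observe that statements (ii), (iii) and (iv) follow formally from (i), so the bulk of the work is establishing the bijection in (i). I would define a transformation $\tau_\mg\colon \SplExt(\mg,\mh)\to\Hom_{\LeibAlg_\bF}(\mg,\Bider(\mh))$ by sending a split extension to its associated morphism $\varphi$ as in \Cref{semidirect}. Naturality in $\mg$ is a routine check: pulling a split extension back along $f\colon \mg'\to\mg$ produces the split extension whose associated morphism is $\varphi\circ f$. Granted (i), $\tau_\mg$ is injective with image the morphisms satisfying condition $(\ref{condition_Leibniz})$, so $\tau$ is a monomorphism of functors from $\SplExt(-,\mh)$ to the representable $\Hom_{\LeibAlg_\bF}(-,\Bider(\mh))$, simultaneously yielding (ii), (iii) and (iv).

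For the forward direction of (i), start from a split extension as in diagram (\ref{diag:SplLeib}), extract the derived action $(l,r)$ via \Cref{Rem:SplitLeib}, and set $\varphi(x)=(-r_x,l_x)$. By \Cref{semidirect}, conditions (L1)--(L3) of \Cref{PropLeib} say precisely that $(-r_x,l_x)$ is a biderivation of $\mh$, while (L4)--(L5) say exactly that $\varphi$ is a Leibniz algebra morphism (this computation is already sketched there). The remaining condition (L6), $l_x\circ(l_y+r_y)=0$, translates verbatim into $(\ref{condition_Leibniz})$ once we use the dictionary $[\![a,y]\!]=-r_y(a)$ and $[\![y,a]\!]=l_y(a)$. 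Since any morphism of split extensions fixing $\mh$ and $\mg$ is an isomorphism (protomodularity, as recalled in \Cref{sec:Preliminaries}), the derived action is an invariant of the isomorphism class, so $\tau_\mg$ is well defined.

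Conversely, given a morphism $\varphi\colon \mg\to\Bider(\mh)$ satisfying $(\ref{condition_Leibniz})$, set $r_x(a)=-[\![a,x]\!]$ and $l_x(a)=[\![x,a]\!]$; the same dictionary shows that all six conditions of \Cref{PropLeib} hold, so $(\mg\oplus\mh,[-,-]_\varphi)$ is a Leibniz algebra, and the canonical projection and injection give a split extension whose associated morphism is $\varphi$. The main technical obstacle is essentially bookkeeping: one must keep track of the minus sign in the first component of $\varphi$ and of the asymmetric bracket of $\Bider(\mh)$ (in which the first slot uses the commutator $[d,d']=dd'-d'd$ while the second uses $Dd'-d'D$) to match (L4) with the first component of $\varphi([x,y])=[\varphi(x),\varphi(y)]$ and (L5) with the second. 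Once this dictionary is fixed, no deeper difficulty arises, and the proof reduces to the verifications already carried out in \Cref{PropLeib} and \Cref{semidirect}.
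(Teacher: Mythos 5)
Your proposal is correct and follows essentially the same route as the paper: the paper likewise defines $\tau_\mg$ by sending a split extension to $x\mapsto(-r_x,l_x)$, derives (i) from \Cref{semidirect} together with the converse construction of the semi-direct product, and obtains (ii)--(iv) from naturality and injectivity of $\tau$. Your explicit bookkeeping of the dictionary $[\![a,y]\!]=-r_y(a)$, $[\![y,a]\!]=l_y(a)$ matching (L6) with condition $(\ref{condition_Leibniz})$ is exactly the translation the paper uses implicitly.
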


\begin{proof} {\ }\\
	\begin{itemize}
	\item[(i)] The first statement follows from  \Cref{semidirect}.
	\item[(ii)] Given any Leibniz algebra $\mh$, we take $T=\Bider(\mh)$ and we define $\tau$ in the following way: for every Leibniz algebra $\mg$, the component
	$$
	\tau_{\mg}\colon \SplExt(\mg,\mh) \rightarrow \Hom_{\LeibAlg_{\bF}}(\mg,\Bider(\mh))
	$$
	is the morphism in $\textbf{Set}$ which associates with any split extension 
	\begin{equation*}
		\begin{tikzcd}
			0\ar[r]
			&\mh \arrow [r, "i"]
			&\hat{\mg} \arrow[r, shift left, "\pi"] & 
			\mg \ar[r]\ar[l, shift left, "s"]
			&0 
		\end{tikzcd}	
	\end{equation*}
	the morphism $\varphi_{(l,r)}\colon \mg \rightarrow \Bider(\mh)$ defined by
	$$
	x \mapsto (-r_x,l_x), \; \; \forall x \in \mg
	$$
	(see \Cref{Rem:SplitLeib}). The transformation $\tau$ is natural. Indeed, for every Leibniz algebra morphism $f\colon \mg' \rightarrow \mg$, it is easy to check that the following diagram in $\textbf{Set}$
	$$
	\begin{tikzcd} 
		\SplExt(\mg,\mh) \arrow[r, "\tau_{\mg}"] \arrow[d, "{\SplExt(f,\mh)}"]
		& \Hom(\mg,\Bider(\mh)) \arrow[d,"{\Hom(f,\Bider(\mh))}"]\\ 
		\SplExt(\mg',\mh) \arrow[r, "\tau_{\mg'}"] & \Hom(\mg',\Bider(\mh)) 
	\end{tikzcd}
	$$
	is commutative.	Moreover, for every Leibniz algebra $\mg$, the morphism $\tau_{\mg}$ is an injection since every element of $\SplExt(\mg,\mh)$ is uniquely determined by the corresponding action of $\mg$ on $\mh$, i.e.\ by the pair of bilinear maps
	$$
	l \colon \mg \times \mh \rightarrow \mh, \; \; \;  r\colon \mh \times \mg \rightarrow \mh\,.
	$$
	Thus $\tau$ is a monomorphism of functors and the category $\LeibAlg_{\bF}$ is weakly action representable.
	\item[(iii)] It follows immediately from (ii) that a \emph{weak actor} of $\mh$ is the Leibniz algebra of biderivations $\Bider(\mh)$.
	\item[(iv)] Finally $\varphi \in \Hom_{\LeibAlg_{\bF}}(\mg,\Bider(\mh))$ is an acting morphism if and only if it defines a split extension of $\mg$ by $\mh$, i.e.\ if and only if 
	it satisfies the condition
	$$
	[\![x,[\![y,a]\!]-[\![a,y]\!]]\!]=0, \; \; \forall x,y \in \mg, \; \forall a \in \mh.
	$$
	\end{itemize}
\end{proof}

\bigskip

\section{Categories of Interest} \label{sec:Interest}

\bigskip

The result of the previous section can be viewed as a particular case of \Cref{USGA} below,  that is valid more in general for \emph{categories of interest}. 
In \cite{Casas} the authors studied the problem of representability of actions for a category of interest $\cC$. They introduced a corresponding category $\cC_G$ of objects satisfying a suitable smaller set of identities than $\cC$, so that $\cC$ becomes a subvariety of $\cC_G$. They proved that, for every object $X$ in $\cC$, there exists an object $\operatorname{USGA}(X)$ of $\cC_G$, called \emph{universal strict general actor} of $X$, with the following property: for every object $B$ in $\cC$ and for every action $\xi$ of $B$ on $X$, there exists a unique morphism $\varphi\colon B \rightarrow \operatorname{USGA}(X)$ in $\cC_G$ such that $\xi$ is uniquely determined by the action of $\varphi(B)$ on $X$. It was clear from their investigation that categories of interest are not action representable in general. In fact J.~R.~A.~Gray showed in \cite{Gray} that a category of interest may not even be weakly action representable. However, by the results in \cite{Casas}, we can deduce the following.

\begin{prop}\label{USGA}
	Let $\cC$ be a category of interest and let $X$ be an object of $\cC$. Then there exists a monomorphism of functors
	$$
	\tau \colon \Act(-,X) \rightarrowtail \Hom_{\cC_G}(U(-),\operatorname{USGA}(X)),
	$$
	where $U \colon \cC \rightarrow \cC_G$ denotes the forgetful functor. If moreover $\operatorname{USGA}(X)$ is an object of $\cC$, then the pair $(\operatorname{USGA}(X),\tau)$ is a weak representation of $\Act(-,X)$.
\end{prop}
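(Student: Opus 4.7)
The plan is to build the natural transformation $\tau$ componentwise from the universal property of $\operatorname{USGA}(X)$ established in \cite{Casas}, and then verify injectivity and naturality by direct unpacking of that property.

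First, I would fix an object $X$ of $\cC$ and define, for every object $B$ of $\cC$, a function
\[
\tau_B\colon \Act(B,X)\longrightarrow \Hom_{\cC_G}(B,\operatorname{USGA}(X)).
\]
Given an action $\xi$ of $B$ on $X$, the defining property of $\operatorname{USGA}(X)$ recalled just before the statement yields a unique morphism $\varphi_\xi\colon B\to\operatorname{USGA}(X)$ in $\cC_G$ such that $\xi$ is determined by $\varphi_\xi$; set $\tau_B(\xi):=\varphi_\xi$. Injectivity of $\tau_B$ follows at once: if two actions $\xi,\xi'$ yield the same $\varphi$, then by the uniqueness clause both are recovered as the action of $\varphi(B)$ on $X$, so they coincide.

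Next I would check naturality. Given a morphism $f\colon B'\to B$ in $\cC$, one has to show that the square
\[
\begin{tikzcd}
\Act(B,X)\ar[r,"\tau_B"]\ar[d,"\Act(f{,}X)"'] & \Hom_{\cC_G}(B,\operatorname{USGA}(X))\ar[d,"\Hom(f{,}\operatorname{USGA}(X))"] \\
\Act(B',X)\ar[r,"\tau_{B'}"] & \Hom_{\cC_G}(B',\operatorname{USGA}(X))
\end{tikzcd}
\]
commutes. Translating actions into split extensions via the natural isomorphism $\Act(-,X)\cong\SplExt(-,X)$, the left vertical map is pullback along $f$, and one simply observes that precomposing $\varphi_\xi$ with $f$ produces a morphism in $\cC_G$ which induces on $B'$ precisely the pulled-back action $f^*\xi$; by the uniqueness in the USGA property, this morphism must be $\varphi_{f^*\xi}$. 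Together with pointwise injectivity, this shows that $\tau$ is a monomorphism in the functor category $[\cC^{op},\mathbf{Set}]$.

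For the second assertion, suppose $\operatorname{USGA}(X)$ happens to lie in $\cC$. Since $\cC$ is, by construction in \cite{Casas}, a full subcategory of $\cC_G$, we have the equality
\[
\Hom_{\cC_G}(B,\operatorname{USGA}(X))=\Hom_{\cC}(B,\operatorname{USGA}(X))
\]
for every object $B$ of $\cC$. Hence $\tau$ takes values in $\Hom_{\cC}(-,\operatorname{USGA}(X))$ and provides, by definition, a weak representation of $\Act(-,X)$ in $\cC$. The only real work is the naturality square, since everything else is a direct transcription of the universal property of $\operatorname{USGA}(X)$; I expect that to be the main, though still routine, obstacle because it requires matching the change-of-base for split extensions on the one side with precomposition of $\cC_G$-morphisms on the other.
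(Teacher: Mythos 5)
Your proposal is correct and follows essentially the same route as the paper: both define $\tau_B$ componentwise via the universal property of $\operatorname{USGA}(X)$ from \cite{Casas}, verify naturality by matching change of base along $f$ with precomposition by $f$ (which the paper delegates to Definition 3.6 of \cite{Casas}), and deduce the second claim from the fact that $\cC$ is a full subcategory of $\cC_G$. The only cosmetic remark is that injectivity of $\tau_B$ rests on the clause that $\xi$ is \emph{determined by} the action of $\varphi(B)$ on $X$, rather than on the uniqueness of $\varphi$ itself, but your argument uses exactly that fact.
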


\begin{proof}
	By the above discussion, for every object $B$ in $\cC$, there exists an injection
	$$
	\tau_B \colon \Act(B,X) \rightarrowtail \Hom_{\cC_G}(U(B),\operatorname{USGA}(X)).
	$$
	We want to prove that the collection $\lbrace \tau_B \rbrace_{B \in \cC}$ gives rise to a natural transformation $\tau$.
	
	Consider in $\cC$ a morphism $f\colon B' \rightarrow B$ and an action $\xi$ of $B$ on $X$. The naturality of $\tau$ is equivalent to saying that 
	$$
	\tau_{B'} (f^*(\xi))=(\tau_B(\xi)) \circ f,
	$$
	for every such $f$ and $\xi$, where $f^*=\Act(f,X)$. This follows immediately from Definition 3.6 of \cite{Casas}.
	
	Since $\cC$ is a full subcategory of $\cC_G$, when $\operatorname{USGA}(X)$ belongs to $\cC$, the pair $(\operatorname{USGA}(X),\tau)$ is a weak representation for the functor $\Act(-,X)$.	
\end{proof}

\begin{corollary}\label{USGA2}
	Let $\cC$ be a category of interest. If $\operatorname{USGA}(X)$ is an object of $\cC$ for every $X$ in $\cC$, then $\cC$ is a weakly action representable category.
\end{corollary}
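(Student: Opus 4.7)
The plan is to derive this corollary as a direct consequence of \Cref{USGA}, essentially just unpacking definitions. The hypothesis assumes that $\operatorname{USGA}(X)$ lies in $\cC$ for every object $X$ of $\cC$, which is exactly the extra condition needed to trigger the second clause of \Cref{USGA}.

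First, I would fix an arbitrary object $X$ of $\cC$ and invoke \Cref{USGA} to obtain a monomorphism of functors $\tau \colon \Act(-,X) \rightarrowtail \Hom_{\cC_G}(-,\operatorname{USGA}(X))$. Since by hypothesis $\operatorname{USGA}(X)$ belongs to the full subcategory $\cC \subseteq \cC_G$, the second sentence of \Cref{USGA} applies and yields that $(\operatorname{USGA}(X), \tau)$ is a weak representation of $\Act(-,X)$ in $\cC$. Using the natural isomorphism $\Act(-,X) \cong \SplExt(-,X)$ recalled in \Cref{sec:Preliminaries}, this transfers to a weak representation of $\SplExt(-,X)$.

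Since this construction works uniformly for every $X$, the definition of weakly action representable category is satisfied, and the result follows. I do not expect any genuine obstacle here: the entire content is in \Cref{USGA}, and the corollary is essentially a quantifier-switch (from "for every $X$" to "the category $\cC$") together with the observation that fullness of $\cC$ in $\cC_G$ makes the hom-sets $\Hom_{\cC_G}(B, \operatorname{USGA}(X))$ and $\Hom_{\cC}(B, \operatorname{USGA}(X))$ coincide whenever the codomain happens to lie in $\cC$.
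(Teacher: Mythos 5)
Your proof is correct and matches the paper's intent exactly: the corollary is stated there without a separate proof precisely because it is the immediate quantification over all $X$ of the second clause of \Cref{USGA}, using fullness of $\cC$ in $\cC_G$ and the identification $\Act(-,X)\cong\SplExt(-,X)$, just as you argue. No gaps.
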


In view of the last results, an explicit description of the USGA in concrete cases is very useful. Two examples were studied in \cite{Casas}:
\begin{itemize}
	\item the category $\textbf{AAlg}_{\bF}$, where $\operatorname{USGA}(X)=\Bim(X)$, for every associative algebra $X$;
	\item the category $\textbf{LeibAlg}_{\bF}$, where $\operatorname{USGA}(\mg)=\Bider(\mg)$, for every Leibniz algebra $\mg$.
\end{itemize}

 In the next section we provide such description in the case of Poisson algebras.
 
 \bigskip
    
\section{Poisson Algebras} \label{sec:Poisson}

\bigskip

The main goal of this section is to study the representability of actions of the categories $\Pois_{\bF}$ of Poisson algebras and $\CPoisAlg_{\bF}$ of commutative Poisson algebra over a field $\bF$ with $\operatorname{char}(\bF) \neq 2$.

\begin{definition}
	A \emph{Poisson algebra} over $\mathbb{F}$ is a vector space $P$ over $\mathbb{F}$ endowed with two bilinear maps
	$$
	\cdot\colon P \times P \rightarrow P
	$$
	$$
	[-,-]\colon P \times P \rightarrow P
	$$
	such that $(P,\cdot)$ is an associative algebra, $(P,[-,-])$ is a Lie algebra and the \emph{Poisson identity} holds:
	$$
	[p,qt]=[p,q]t+q[p,t], \; \; \forall p,q,t \in P,
	$$
	i.e.\ the adjoint map $[p,-]\colon P \rightarrow P$ is a derivation of the associative algebra $(P,\cdot)$. A Poisson algebra $P$ is said to be commutative if $(P, \cdot)$ is a commutative associative algebra.
\end{definition}

Now we recall the main properties of split extension of Poisson algebras. 

\begin{definition}\label{Rem:SplitPois}
	Let 
	\begin{equation}\label{diag:SplPois}
		\begin{tikzcd}
			0\ar[r]
			&V \arrow [r, "i"]
			&\hat{P} \arrow[r, shift left, "\pi"] & 
			P \ar[r]\ar[l, shift left, "s"]
			&0 
		\end{tikzcd}	
	\end{equation}
	be a split extension of Poisson algebras. The triple of bilinear maps 
	\[
	l \colon P \times V \rightarrow V, \; \; \; r\colon V \times P \rightarrow V,\; \; \; [\![-,-]\!]\colon P \times V \rightarrow V
	\]
	defined by
	$$
	p \ast y = s(p) \cdot_{\hat{P}} i(y), \; \; 
	x \ast q = i(x) \cdot_{\hat{P}} s(q), \; \;
	[\![p,y]\!]=[s(p),i(x)]_{\hat{P}}, \; \; \forall p,q \in P, \forall x,y \in V,
	$$
	where $p \ast -=l(p,-)$ and $- \ast q=r(-,q)$, is called the \emph{derived action} of $P$ on $V$ associated with \eqref{diag:SplPois}.
\end{definition}

As in the case of Leibniz algebras, given a triple of bilinear maps 
\[
l \colon P \times V \rightarrow V, \; \; \; r\colon V \times P \rightarrow V, \; \; \; [\![-,-]\!]\colon P \times V \rightarrow V,
\]
one can define two bilinear operations on $P \oplus V$
$$
(p,x)\diamond(q,y)=(pq,x\cdot_V y + p \ast y + x \ast q)
$$
and
$$
\{(p,x),(q,y)\}=([p,q],[x,y]_V+[\![p,y]\!]-[\![q,x]\!]),
$$
for every $(p,x),(q,y) \in P \oplus V$, and this defines a Poisson algebra structure on the vector space $P \oplus V$ if and only if the triple $(l,r,[\![-,-]\!])$ is a derived action of $P$ on $V$. 

This is equivalent to a set of conditions on $(l,r,[\![-,-]\!])$, as explained in the following proposition (again, see Theorem 2.4 in \cite{Orzech} and Proposition 1.1 in \cite{Datuashvili}).

\begin{prop}\label{PropPois}
	$(P \oplus V, \diamond, \{-,-\})$ is a Poisson algebra if and only if 
	\begin{enumerate}
		\item[(P1)] $(P \oplus V, \diamond)$ is an associative algebra, i.e.\ the following equalities hold
		\begin{itemize}
			\item $p \ast (x \cdot_V y)=(p \ast x)\cdot_V y$;
			\item $(x \cdot_V y) \ast p = x \cdot_V (y \ast p)$;
			\item $x \cdot_V (p \ast y) = (x \ast p) \cdot_V y$;
			\item $(p \ast x) \ast q = p \ast (x \ast q)$;
			\item $(pq)\ast x = p \ast (q \ast x)$;
			\item $x \ast (pq)= (x \ast p) \ast q$;
		\end{itemize}
	\item[(P2)] $(P \oplus V, \{-,-\})$ is a Lie algebra, i.e.
	\begin{itemize}
		\item $[\![p,[x,y]_V]\!]=[[\![p,x]\!],y]_V+[x,[\![p,y]\!]]_V$;
		\item $[\![[p,q],x]\!]=[\![p,[\![q,x]\!]]\!] - [\![q,[\![p,x]\!]]\!]$;
	\end{itemize}
	\item[(P3)]  $[\![pq,x]\!]=p\ast [\![q,x]\!] + [\![p,x]\!]\ast q$;
	\item[(P4)]  $[p,q] \ast x = p \ast [\![q,x]\!] - [\![q,p \ast x]\!]$;
	\item[(P5)]  $x \ast [p,q]=[\![q,x]\!] \ast p - [\![q,x \ast p]\!]$;
	\item[(P6)]  $p \ast [x,y]_V=[p \ast x, y]_V - [\![p,y]\!] \cdot_V x$;
	\item[(P7)]  $[x,y]_V \ast p = [x \ast p,y]_V - x \cdot_V [\![p,y]\!]$;
	\item[(P8)]  $[\![p,x \cdot_V y]\!] = [\![p,x]\!] \cdot_V y + x \cdot_V [\![p,y]\!]$;
\end{enumerate}
	for every $p,q \in P$ and for every $x,y \in V$. The resulting Poisson algebra is the \emph{semi-direct product} of $P$ and $V$ and it is denoted by $P \ltimes V$.
\end{prop}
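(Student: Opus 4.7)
My plan is to prove the equivalence by a direct but bookkeeping-heavy verification, in the same spirit as \Cref{PropLeib} for Leibniz algebras. Concretely, I would evaluate each of the three defining axioms of a Poisson algebra on $(P \oplus V, \diamond, \{-,-\})$ using elements of the form $(p,0)$ and $(0,x)$ separately, since everything is bilinear. Restricting to all $(p,0)$-triples reduces each axiom to the corresponding axiom in $P$, and restricting to all $(0,x)$-triples reduces it to the corresponding axiom in $V$; these reductions are free. The content lies in what happens on \emph{mixed} triples, and the claim is that these mixed evaluations produce exactly the eight conditions (P1)--(P8).

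First I would treat associativity of $\diamond$. The triples with exactly one or exactly two entries in $V$ give six genuinely distinct identities, corresponding one-to-one to the six bullet points of (P1): for instance, the triple $((p,0),(0,x),(0,y))$ produces $p\ast(x\cdot_V y)=(p\ast x)\cdot_V y$, while $((p,0),(q,0),(0,x))$ produces $(pq)\ast x = p\ast(q\ast x)$, and so on. Next I would do the Lie bracket. Skew-symmetry is automatic from the formula. The Jacobi identity on pure $P$-triples and pure $V$-triples is again free; on mixed triples it gives exactly the two identities of (P2), which can also be read as saying that $[\![-,-]\!]$ is an action of the Lie algebra $P$ on the Lie algebra $V$.

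Finally, for the Poisson identity $\{(p,x),(q,y)\diamond(r,z)\}=\{(p,x),(q,y)\}\diamond(r,z)+(q,y)\diamond\{(p,x),(r,z)\}$, I would again split by the number of $V$-entries. The all-$P$ and all-$V$ cases are free. The six mixed cases produce, after simplification, the six identities (P3)--(P8): placing the single $V$-entry in different slots and exchanging the roles of $P$ and $V$ entries yields successively the conditions relating $[\![pq,x]\!]$, $[p,q]\ast x$, $x\ast[p,q]$, $p\ast[x,y]_V$, $[x,y]_V\ast p$, and $[\![p,x\cdot_V y]\!]$ to the bimultiplier and bracket actions. Conversely, assuming (P1)--(P8), one reassembles the three axioms on general elements by bilinear expansion.

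The real obstacle is not conceptual but combinatorial: making sure the enumeration over mixed triples is exhaustive and that each case produces exactly one of the listed identities, without redundancy (some mixed positions produce identities already implied by those of $P$, $V$, or by earlier cases, and these should be discarded). Since this is a special case of \cite[Proposition 1.1]{Datuashvili} — cited explicitly above — and completely parallel to \Cref{PropLeib}, I would present the proof by stating the strategy and then listing the correspondence between mixed triples and the conditions (P1)--(P8), without writing out all six associator and six Leibniz-rule checks in full.
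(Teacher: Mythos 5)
Your plan is correct and is the standard direct verification; note that the paper itself offers no proof of this proposition, deferring entirely to Theorem 2.4 of Orzech and Proposition 1.1 of Datuashvili, so your componentwise case analysis simply makes explicit the argument that the citation encapsulates. The enumeration you describe does work out: the six mixed associativity triples give the six bullets of (P1), the mixed Jacobi triples give (P2), and the six mixed instances of the Poisson identity (three with one $V$-entry, three with two) give (P3)--(P8) --- with the one caveat that the identities produced by certain slot placements match (P4)--(P7) as listed only after invoking skew-symmetry of the two Lie brackets, which is legitimate since skew-symmetry is established first; it would be worth saying this explicitly when you list the correspondence, so the reader does not expect a literal character-for-character match in every slot.
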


\begin{rem}
	We recall that, for any split extension \eqref{diag:SplPois}, we have an isomorphism of split extensions
	\begin{equation*} 
		\begin{tikzcd} 
			0\ar[r] 
			&V \arrow [r, "i_2"] \ar[d, "\id_{V}"'] 
			&P  \ltimes V \arrow[r, shift left, "\pi_1"] \ar[d, "\theta"]& 
			P \ar[r]\ar[l, shift left, "i_1"]\ar[d, "\id_{P}"] 
			&0 \\
			0\ar[r] 
			&V \arrow [r, "i"] 
			&\hat{P} \arrow[r, shift left, "\pi"] & 
			P \ar[r]\ar[l, shift left, "s"] 
			&0 
		\end{tikzcd} 
	\end{equation*}
	where $i_1,i_2,\pi_1$ are the canonical injections and projection and $\theta \colon P \ltimes V \rightarrow \hat{P}$ is defined by $\theta(p,x)=s(p)+i(x)$, for every $(p,x) \in P \oplus V$.
\end{rem}

The category $\Pois_{\bF}$ has two obvious forgetful functors to the categories $\Ass_{\bF}$ and $\LieAlg_{\bF}$. Now, the category of Lie algebras is action representable: any split extension of a Lie algebra $P$ by another Lie algebra $V$ corresponds to a Lie algebra morphism \hbox{$\varphi\colon P \rightarrow \Der(V)$}. On the other hand, we know that $\Ass_{\bF}$ is a weakly action representable category and a split extension of an associative algebra $P$ by another associative algebra $V$ corresponds to an associative algebra morphism \hbox{$\varphi\colon P \rightarrow \Bim(V)$}. Notice that $\Der(V)$ is an actor, while $\Bim(V)$ is only a weak actor (see \Cref*{sec:Preliminaries}), in fact they are both universal strict general actors in the sense of \cite{Casas}. It is not clear whether the category $\Pois_{\bF}$ is weakly action representable, therefore in this section we start by describing a universal strict general actor $\operatorname{USGA}(V)$, when $V$ is a Poisson algebra. As explained in \Cref*{sec:Interest}, in general $\operatorname{USGA}(V)$ lies in a larger category $\cC_G$, which in this case is the category $\NAlg_{\bF}^2$ of algebras over $\bF$ with two not necessarily associative bilinear operations. Thus we look for a suitable subspace
$$
[V] \leq \Bim(V) \times\Der(V)
$$
and this must be endowed with two bilinear operations
$$
\cdot_{[V]},[-,-]_{[V]} \colon [V] \times [V] \rightarrow [V]
$$
 such that we can associate with every split extension of $P$ by $V$ in $\Pois_{\bF}$ a morphism
	$$
	\phi\colon P \rightarrow [V]
	$$
	 in $\NAlg_{\bF}^2$, defined by
	$$
	\phi(p)=(p \ast -, - \ast p, [\![p,-]\!]), \; \; \forall p \in P.
	$$
	Thus
	$$
	\phi(pq)=\phi(p) \cdot_{[V]} \phi(q)
	$$
	and
	$$
	\phi([p,q])=[\phi(p),\phi(q)]_{[V]}.
	$$
	In other words, by using  \Cref{PropPois}, the operations in $[V]$ must satisfy the following two conditions
	\begin{itemize}
		\item $(p \ast -, - \ast p, [\![p,-]\!]) \cdot_{[V]} (q \ast -, - \ast q, [\![q,-]\!])=\\=((pq) \ast -, - \ast (pq), p \ast [\![q,-]\!] + [\![p,-]\!] \ast q)$
		\item  $[(p \ast -, - \ast p, [\![p,-]\!]),(q \ast -, - \ast q, [\![q,-]\!])]_{[V]}=\\=(p \ast [\![q,-]\!] - [\![q,p \ast -]\!], [\![q,-]\!] \ast p - [\![q,- \ast p]\!], [\![p,[\![q,-]\!]]\!]-[\![q,[\![p,-]\!]]\!])$
	\end{itemize}
for every $p,q \in P$.
\\ \\
We define  $[V]$ as the subspace of all triples $(f,F,d)$ of $\Bim(V) \times \Der(V)$ satisfying the following set of equations:
\begin{enumerate}
\item[(V1)]  $f([x,y]_V)=[f(x),y]_V-d(y)\cdot_V x$;
\item[(V2)]  $F([x,y]_V)=[F(x),y]_V-x \cdot_V d(y)$;
\item[(V3)]  $d(x \cdot_V y) = d(x) \cdot_V y + x \cdot_V d(y)$;
\end{enumerate} 
for every $x,y \in V$. 

\begin{rem}
The subspace $[V]$ is not empty, since 
$$
(x \cdot_V -, - \cdot_V x, [x,-]_V) \in [V]
$$
for every $x \in V$. This triples are called \emph{inner multipliers of $V$}.
\end{rem}

Now we are ready to enunciate and prove the following.

\begin{thm}\label{thmPois}
Let $(V,\cdot_V,[-,-]_V)$ be a Poisson algebra.
\begin{itemize}
	\item[(i)] The space $[V]$ with the bilinear operations
	$$
	(f,F,d) \cdot_{[V]} (f',F',d')=(f \circ f', F' \circ F, f \circ d' + F' \circ d)
	$$
	$$
	[(f,F,d),(f',F',d')]_{[V]}=(f \circ d'-d' \circ f, F \circ d'-d' \circ F, d \circ d' - d' \circ d)
	$$
	is an object of $\NAlg_{\bF}^2$;
	\item[(ii)] The set $\Inn(V)$ of all inner multipliers of $V$ is a subalgebra of $[V]$ and it is a Poisson algebra itself;
	\item[(iii)] For every object $(P,\cdot,[-,-])$ in $\Pois_{\bF}$, the set of isomorphism classes of split extension of $P$ by $V$ are in bijection with the morphisms 
	$$
		\phi=(\phi_1,\phi_2,\phi_3)\colon U(P) \rightarrow [V]
	$$
	in $\NAlg_{\bF}^2$, where $U \colon \Pois_{\bF} \rightarrow \NAlg_{\bF}^2$ denotes the forgetful functor, such that $(\phi_1,\phi_2)\colon (P, \cdot) \rightarrow \Bim(V)$ is an acting morphism in the category $\Ass_{\bF}$.
  \item[(iv)] There exists a monomorphism of functors
  $$
  \tau\colon \SplExt(-,V) \rightarrowtail \Hom_{\NAlg_{\bF}^2}(U(-),[V]),
  $$
 such that an arrow $(\phi \colon U(P) \rightarrow [V]) \in \Imm(\tau_P)$
  if and only if $(\phi_1,\phi_2) $ is an acting morphism in $\Ass_{\bF}$.
  \item[(v)] If $([V], \cdot_{[V]}, [-,-]_{[V]})$ is a Poisson algebra, then the pair $([V],\tau)$ becomes a weak representation for the functor $\SplExt(-,V)$.
\end{itemize}
\end{thm}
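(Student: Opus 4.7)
The strategy is to unfold the definitions and match the axioms (P1)--(P8) of \Cref{PropPois} with the structural properties of $[V]$ and its operations.

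For part (i), closure of $[V]$ under $\cdot_{[V]}$ and $[-,-]_{[V]}$ would be verified by direct computation. The first two components of the product lie in $\Bim(V)$ by the known associative structure there, and the third component $f\circ d'+F'\circ d$ is then shown to satisfy (V1)--(V3) by combining (V1)--(V3) for the two input triples with the bimultiplier compatibility $xf(y)=F(x)y$. The delicate step is showing that $f\circ d'+F'\circ d$ is actually an associative derivation of $V$: after expanding $(f\circ d'+F'\circ d)(xy)$ using that $d,d'$ satisfy (V3), the leftover terms reduce to a quantity that vanishes thanks to an identity such as $(ax-xa)[b,y]_V=[a,x]_V(by-yb)$, which follows from expanding $[ab,xy]_V$ via the Poisson identity in two different ways (as derivation on either factor). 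The bracket case proceeds along the same lines, and the analogous verifications of (V1), (V2) for the product use the compatibility $xf(y)=F(x)y$ directly. This is the main computational obstacle of the theorem.

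For part (ii), the map $V\to[V]$, $x\mapsto(x\cdot_V-,-\cdot_V x,[x,-]_V)$, lands in $[V]$ because (V1)--(V3) all reduce to instances of the Poisson identity; the same axioms, together with associativity and Jacobi, show that this map preserves both operations of $[V]$, so $\Inn(V)$ is a subalgebra of $[V]$ which inherits a Poisson structure.

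For parts (iii) and (iv), I would set $\tau_P(\hat P)(p):=\phi(p):=(p\ast-,-\ast p,[\![p,-]\!])$ for the derived action associated to a split extension \eqref{diag:SplPois}, and then verify a precise dictionary between the axioms of \Cref{PropPois} and the structural properties of $\phi$: (P6), (P7), (P8) are exactly (V1), (V2), (V3), so $\phi(p)\in[V]$; the first two components of $\phi(pq)=\phi(p)\cdot_{[V]}\phi(q)$ come from the bimultiplier and multiplicativity parts of (P1), while the third component comes from (P3); the three components of $\phi([p,q])=[\phi(p),\phi(q)]_{[V]}$ come from (P4), (P5), and the second half of (P2); the permutability defining the \emph{acting morphism} condition in $\Ass_{\bF}$ is the remaining equation of (P1); the first half of (P2) is subsumed into requiring $\phi_3(p)\in\Der(V)$ as a Lie derivation. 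Conversely, any such $\phi$ defines a derived action by reading the dictionary backwards, and the associated semi-direct product gives the required split extension, which completes (iii). For (iv), naturality of $\tau$ in $P$ is a routine check (pulling back a split extension along $f\colon P'\to P$ precomposes the associated $\phi$ with $f$); injectivity of each $\tau_P$ follows because the split extension is determined up to isomorphism by its derived action, which is entirely encoded in $\phi$; and the image description is precisely (iii). Finally, for part (v), if $[V]$ is itself a Poisson algebra, then any $\NAlg_{\bF}^2$-morphism from a Poisson algebra $P$ to $[V]$ is automatically a Poisson morphism, since there are no identities to check beyond preservation of the two bilinear operations. Hence $\tau$ factors as a monomorphism $\SplExt(-,V)\rightarrowtail\Hom_{\Pois_{\bF}}(-,[V])$, making $([V],\tau)$ a weak representation of $\SplExt(-,V)$.
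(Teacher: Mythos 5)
Your route coincides with the paper's in all five parts: direct verification of closure for (i), realising $\Inn(V)$ as the image of the canonical morphism $V\to[V]$ for (ii), the dictionary between (P1)--(P8) of \Cref{PropPois} and the equations (V1)--(V3) plus the acting-morphism condition in $\Ass_{\bF}$ for (iii) and (iv) (your matching of the axioms is exactly the one the paper uses, as are the naturality and injectivity arguments for $\tau$), and fullness of $\Pois_{\bF}$ in $\NAlg_{\bF}^2$ for (v), which is precisely how the paper's appeal to \Cref{USGA} works out.

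However, the step you yourself single out as the main obstacle is not closed by the argument you give. To see that the third component $f\circ d'+F'\circ d$ of a product of two triples satisfies (V3), one expands $(f\circ d'+F'\circ d)(x\cdot_V y)$ using (V3) for $d,d'$ and the bimultiplier identities; the leftover term is
$$
(f(x)-F(x))\cdot_V d'(y)\;-\;d(x)\cdot_V(f'(y)-F'(y)),
$$
and this must vanish for \emph{arbitrary} triples $(f,F,d),(f',F',d')\in[V]$. The identity you invoke, $(ax-xa)\cdot_V[b,y]_V=[a,x]_V\cdot_V(by-yb)$, is exactly this statement in the special case where both triples are inner, and your derivation of it (expanding $[ab,xy]_V$ via the Poisson identity in two ways) is only available in that case, because for a general triple $f$ is not left multiplication by an element of $V$. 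What the defining equations of $[V]$ actually yield (by expanding $F([u,v\cdot_V w])$, resp.\ $f([u,v\cdot_V w])$, in two ways) are the one-sided versions $(uv-vu)\cdot_V d(w)=[u,v]_V\cdot_V(f(w)-F(w))$ and $d(v)\cdot_V(uw-wu)=(f(v)-F(v))\cdot_V[u,w]_V$, in which one factor is still inner; neither produces the identity needed for two general triples. So this step remains unjustified in your write-up, and you should either prove the displayed identity from (V1)--(V3) and the bimultiplier axioms or restrict the carrier of $[V]$ so that it holds. (The paper does not help you here: it carries out the closure computation in detail only for the bracket $[-,-]_{[V]}$ and declares the product case ``similar''.)
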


\begin{proof}{\ }\\
\begin{itemize}
\item[(i)] In order to show that $[V]$ is an object of $\NAlg_{\bF}^2$, we have to prove that the bilinear operations are well defined. We observe that
	$$
	(f \circ d' - d' \circ f, F \circ d' - d' \circ F)
 \in \Bim(V)	$$
  and
  $$
  f \circ d' + F' \circ d \in \Der(V),
  $$
	for every $(f,F,d),(f',F',d') \in [V]$. This follows from equations (V1)-(V2)-(V3), since
	$$
	(f \circ d' - d' \circ f)(x \cdot_V y)=(f \circ d' - d' \circ f)(x) \cdot_V y,
	$$
	$$
	(F \circ d' - d' \circ F)(x \cdot_V y)=x \cdot_V (F \circ d' - d' \circ F)(y),
	$$
	$$
	x \cdot_V (f \circ d' - d' \circ f)(y)=(F \circ d' - d' \circ F)(x) \cdot_V y
	$$
	and
	$$
	(f \circ d' + F' \circ d)([x,y]_V)=
	$$
	$$
	=[(f \circ d' + F' \circ d)(x),y]_V+[x,(f \circ d' + F' \circ d)(y)]_V,
	$$
	for every $x,y \in V$. Moreover the resulting triples 
	$$
	(f \circ f', F' \circ F, f \circ d' + F' \circ d)
	$$
	$$
	(f \circ d'-d' \circ f, F \circ d'-d' \circ F, d \circ d' - d' \circ d)
	$$
	belong to $[V]$, i.e.\ they satisfy equations (V1)-(V2)-(V3). Here we show this statement only for the second triple, since for the first triple the computations are similar. We have that
	\begin{equation*}
		(f \circ d' - d' \circ f)[x,y]_V=
	\end{equation*}
\begin{equation*}
	=f([d'(x),y]_V+[x,d'(y)]_V)-d'([f(x),y]_V-d(y)\cdot_V x)=
\end{equation*}
\begin{equation*}
	=[f(d'(x)),y]_V-d(d'(y)) \cdot_V x - [d'(f(x)),y]_V+d'(d(y)) \cdot_V x=
\end{equation*}
\begin{equation*}
	=[(F \circ d' - d' \circ F)(x),y]_V-(d \circ d'-d' \circ d)(y) \cdot_V x.
\end{equation*}
In the same way one can check that
\begin{equation*}
	(F \circ d' - d' \circ F)[x,y]_V=[(F \circ d' - d' \circ F)(x),y]_V-x \cdot_V (d \circ d' -d' \circ d)(y).
\end{equation*}
Finally
\begin{equation*}
(d \circ d' -d' \circ d)(x \cdot_V y)=
\end{equation*}
\begin{equation*}
	=d(d'(x) \cdot_V y +x \cdot_V d'(y)) - d'(d(x) \cdot_V y +x \cdot_V d(y))=
\end{equation*}
\begin{equation*}
	=d(d'(x))\cdot_V y + x \cdot_V d(d'(y)) - d'(d(x)) \cdot_V y - x \cdot_V d'(d(y)) =
\end{equation*}
\begin{equation*}
	= (d \circ d' -d' \circ d)(x) \cdot_V y + x \cdot_V (d \circ d' -d' \circ d)(y).
\end{equation*}
Thus $[V]$ is an object of $\NAlg_{\bF}^2$.
    \item[(ii)] The subspace $\Inn(V)$ is precisely the image of the morphism 
    $$
    \Inn \colon V \rightarrow [V]
    $$
    defined by
    $$
    x \mapsto (x \cdot_V -, - \cdot_V x, [x,-]_V), \;\; \forall x \in V. 
    $$
	\item[(iii)] We associate with any split extension. 
	\begin{equation*}
		\begin{tikzcd}
			0\ar[r]
			&V \arrow [r, "i"]
			&\hat{P} \arrow[r, shift left, "\pi"] & 
			P \ar[r]\ar[l, shift left, "s"]
			&0 
		\end{tikzcd}	
	\end{equation*}
	in the category $\Pois_{\bF}$ the morphism
	$$
	U(P) \rightarrow [V]
	$$
	in $\NAlg_{\bF}^2$, where $U \colon \Pois_{\bF} \rightarrow \NAlg_{\bF}^2$ denotes the forgetful functor, defined by
	$$
	p \rightarrow (p \ast -, - \ast p, [\![p,-]\!]), \; \; \forall p \in P,
	$$
	where the bimultiplier $(p \ast -, - \ast p)$ and the derivation $[\![p,-]\!]$ are as in \Cref{Rem:SplitPois}. Since $\hat{P}$ is also a split extension of $(P,\cdot)$ by $(V, \cdot_V)$ in the category $\Ass_{\bF}$, we have that
	$$
	p \ast (x \ast q) = (p \ast x) \ast q,
	$$
	for every $p,q \in P$ and $x \in V$. Conversely, given a Poisson algebra $P$ and a morphism $\phi=(\phi_1,\phi_2,\phi_3) \in \Hom_{\textbf{NAlg}^2_{\bF}}(U(P),[V])$ defined by
	$$
	\phi(p)=(p*_{\phi}-,-*_{\phi}p,[\![p,-]\!]_{\phi}), \; \; \forall p \in P,
	$$
	such that $(\phi_1,\phi_2)\colon (P,\cdot) \rightarrow \Bim(V)$ is an acting morphism in $\Ass_{\bF}$, we can associate with $\phi$ the split extension of Poisson algebras
	\begin{equation*}
		\begin{tikzcd}
			0\ar[r]
			&V \arrow [r, "i"]
			&(P \oplus V, \diamond_{(\phi_1,\phi_2)},\{-,-\}_{\phi_3}) \arrow[r, shift left, "\pi"] & 
			P \ar[r]\ar[l, shift left, "s"]
			&0 
		\end{tikzcd}	
	\end{equation*}
	where
	$$
	(p,x) \diamond_{(\phi_1,\phi_2)} (q,y) = (pq, x \cdot_V y + p*_{\phi}y + x*_{\phi}q)
	$$
	and
	$$
	\{(p,x),(q,y)\}_{\phi_3}=([p,q],[x,y]_V + [\![p,y]\!]_{\phi}-[\![q,x]\!]_{\phi}),
	$$
	for every $(p,x),(q,y) \in P \oplus V$. One can check that these bilinear operations define a Poisson algebra structure on $P \oplus V$.
\item[(iv)] We define
	$$
	\tau\colon \SplExt(-,V) \rightarrowtail \Hom_{\NAlg_{\bF}^2}(U(-),[V])
	$$
	in the following way: for every object $P$ in  $\Pois_{\bF}$, $\tau_P$ associates with any split extensions of $P$ by $V$ the morphism
	$$
	U(P) \rightarrow [V]
	$$
    defined as in (iii). By the description of split extensions in \Cref{Rem:SplitPois}, each component $\tau_P$ is injective since every morphism which belongs to $\Imm(\tau_P)$ determines a unique split extension of $P$ by $V$. One can check that the family of injections 
    $$
    \tau_P\colon \SplExt(P,V) \rightarrowtail \Hom_{\NAlg_{\bF}^2}(U(P),[V])
    $$
    is natural in $P$. By (iii), an arrow $\phi=(\phi_1,\phi_2,\phi_3) \in \Hom_{\NAlg_{\bF}^2}(U(P),[V])$ belongs to $\Imm(\tau_P)$ if and only if $(\phi_1,\phi_2) \colon (P,\cdot) \rightarrow \Bim(V)$ is an acting morphism in $\Ass_\bF$.
    \item[(v)] The last statement follows from \Cref{USGA}, since $[V]=\operatorname{USGA}(V)$.
\end{itemize}
\end{proof}

The following example shows that $([V], \cdot_{[V]}, [-,-]_{[V]})$ is not in general a Poisson algebra.

\begin{ex}
	Let $V=\bF^2$ be the the abelian two-dimensional algebra (i.e. $x \cdot_V y=[x,y]_V=0$, for every $x,y \in V$). It turns out that
	$$
	[V]=\operatorname{End}(V)^3 \cong \operatorname{M}_2(\bF)^3,
	$$
	as vector spaces, since every linear endomorphism of $V$ is represented by a $2 \times 2$ matrix with respect to a fixed basis. Then the bilinear operations of $[V]$ can be represented as
	$$
	(A,B,C) \cdot_{[V]} (A',B',C')=(AA', B'B, AC'+B'C),
	$$
	$$
	[(A,B,C),(A',B',C')]_{[V]}=(AC'-C'A, BC'-C'B, CC'-C'C),
	$$
	for every $(A,B,C),(A',B',C') \in \operatorname{M}_2(\bF)^3$ and one can check that $[V]$ is not a Poisson algebra since, for instance, the bracket $[-,-]_{[V]}$ is not skew-symmetric.
\end{ex}

By Theorem 3.9 of \cite{Casas}, we can deduce that the category $\Pois_{\bF}$ is not action representable. Indeed, since for a Poisson algebra $V$, $\operatorname{USGA}(V)$ is not in general a Poisson algebra, then $V$ does not admit an actor.

The following remark shows that there are special cases where $\tau$ becomes a natural isomorphism.

\begin{rem}
	Let $(V,\cdot_V,[-,-]_V)$ be a Poisson algebra such that the annihilator 
	$$\operatorname{Ann}(V)=\lbrace x \in V \; | \; x \cdot_V y = y \cdot_V x = 0, \; \forall y \in V \rbrace$$
	of the associative algebra $(V,\cdot_V)$ is trivial or $(V^2,\cdot_V)=(V,\cdot_V)$. In this case we have that
	\begin{equation}\label{eqPois}
		f \circ F' = F' \circ f,
	\end{equation}
	for every $(f,F),(f',F') \in \Bim(V)$ (see \cite{Casas} for more details). It follows that, for any other Poisson algebra $P$, every arrow
	$$
	\phi \colon U(P) \rightarrow [V]
	$$
	belongs to $\Imm(\tau_P)$ and we have a natural isomorphism
	$$
	\SplExt(-,V) \cong \Hom_{\NAlg_{\bF}^2}(U(-),[V]).
	$$
\end{rem}

Notice that the conditions $\operatorname{Ann}(V)=0$ and $V^2=V$ are not necessary to obtain equation \eqref{eqPois}. For instance, if $V=\bF$ is the abelian one-dimensional algebra, then $\operatorname{Ann}(V)=V$, $V^2=0$, $[V] \cong \bF^3$ as vector spaces (every linear endomorphism of $V$ is of the form $\varphi_{a} \colon x \mapsto a x$, with $a\in \bF$) and every left multiplier of $V$ commutes with every right multiplier. Moreover it turns out that
$$
(\varphi_{a},\varphi_{b},\varphi_{c}) \cdot_{[V]} (\varphi_{a'},\varphi_{b'},\varphi_{c'}) = (\varphi_{aa'},\varphi_{b'b},\varphi_{ac'+b'c}) 
$$
is an associative product and
$$
[(\varphi_{a},\varphi_{b},\varphi_{c}), (\varphi_{a'},\varphi_{b'},\varphi_{c'})]_{[V]}= (0,0,0).
$$
Thus $[V]$ is a Poisson algebra and 
$$
\SplExt(-,V) \cong \Hom_{\Pois_{\bF}}(-,[V]),
$$
i.e., $[V]$ is the actor of $V$.

\subsection*{Commutative Poisson algebras}

We want now to describe the universal strict general actor $[V]_c=\operatorname{USGA}(V)$ of an object $V$ of the subvariety $\CPoisAlg_{\bF}$ of commutative Poisson algebras over $\bF$.

If $V$ is a commutative Poisson algebra, then we define $[V]_c$ as the algebra of all pairs $(f,d) \in \operatorname{M}(V) \times \Der(V)$, where
$$
\operatorname{M}(V)=\lbrace f \in \End(V) \; | \; f(xy)=f(x)y, \; \forall x,y \in V \rbrace
$$
is the associative algebra of \emph{multipliers} of $V$, such that 
\begin{enumerate}
	\item[(V1)]  $f([x,y]_V)=[f(x),y]_V-d(y)\cdot_V x$;
	\item[(V2)]  $d(x \cdot_V y) = d(x) \cdot_V y + x \cdot_V d(y)$;
\end{enumerate} 
endowed with the two bilinear operations
$$
(f,d) \cdot_{[V]_c} (f',d')=(f \circ f', f \circ d' + f' \circ d),
$$
$$
[(f,d),(f',d')]_{[V]_c}=(f \circ d'-d' \circ f, d \circ d' - d' \circ d),
$$
for every $(f,d),(f',d') \in [V]_c$. One can check that $[V]_c$ is isomorphic to the subalgebra of $[V]$ of triples of the form $(f,f,d)$.

Using the notation of \Cref*{thmPois}, one can associate, with any split extension of $P$ by $V$ in $\CPoisAlg_{\bF}$, a morphism
$$
\phi \colon \tilde{U}(P) \rightarrow [V]_c, \; \; p \mapsto (p \ast -, [\![p,-]\!]), \; \; \forall p \in P
$$
in $\NAlg_{\bF}^2$, where $\tilde{U} \colon \CPoisAlg_{\bF} \rightarrow \NAlg_{\bF}^2$ denotes the forgetful functor. Conversely, if $P$ and $V$ are commutative Poisson algebras, every morphism $\phi \colon \tilde{U}(P) \rightarrow [V]_c$ in $\NAlg_{\bF}^2$ defines a commutative Poisson algebra split extension. Indeed, by (iii) of \Cref{thmPois}, such $\phi \in \Imm(\tau_P)$ if and only if $p \mapsto p \ast -$ defines an action in the category $\CAAlg_{\bF}$ of commutative associative algebra over $\bF$, and moreover 
$\Act_{\CAAlg_{\bF}}(-,V) \cong \Hom_{\Ass_{\bF}}(\overline{U}(-),\operatorname{M}(V))$ (see \cite{ActRepr}), where $\overline{U} \colon \CAAlg_{\bF} \rightarrow \Ass_{\bF}$ is the forgetful functor. Thus there exists a natural isomorphism 
$$
\SplExt(-,V) \cong \Hom_{\NAlg_{\bF}^2}(\tilde{U}(-),[V]_c).
$$

Finally, we observe that also in this case, $[V]_c$ needs not be an object of $\CPoisAlg_\bF$. For instance, if $V=\bF^2$ is the abelian two-dimensional algebra, then 
\[
[V]_c=\operatorname{M}(V) \times \Der(V)=\End(V)^2
\]
as a vector space, and it is easy to check that the bilinear operation
\[
(f,d) \cdot_{[V]_c} (f',d')=(f \circ f', f \circ d' + f' \circ d)
\]
is not commutative, nor associative.
%
%

\section{Open Problem}
We observe that our investigation does not clarify whether the categories $\Pois_{\bF}$ and $\CPoisAlg_{\bF}$ of Poisson algebras and commutative Poisson algebras are weakly action representable or not. Nevertheless, the explicit construction of the universal strict general actor $\operatorname{USGA}(V)$ provides an operational tool to represent actions on/split extensions by a (commutative) Poisson algebra~$V$. Notice that this construction was generalized for a general variety of non-associative algebras over a field in~\cite{WRAAlg}, where the authors proved that the category of commutative associative algebras is weakly action representable, even though the USGA is not an object of it (see~\cite[Theorem 2.11]{WRAAlg}). 

We further observe that the first examples of action accessible varieties of non-associative algebras that fail to be weakly action representable were found in~\cite{WRAAlg2}, where it was proved that the categories of $n$-solvable Lie algebras ($n \geq 2$) and $k$-nilpotent Lie algebras ($k \geq 3$) do not satisfy weak action representability. 
%
%

\bigskip
	
%
%

\printbibliography

@article{WAR,
author = {Janelidze, G.},
title = {Central extensions of associative algebras and weakly action representable categories},
journal = {Theory and Applications of Categories},
volume = {38},
number = {36},
pages = {1395-1408},
year = {2022},
}

@article{WRAAlg,
author = {Brox, J. and García-Martínez, X. and Mancini, M. and Van der Linden, T. and Vienne, C},
title = {Weak representability of actions of non-associative algebras},
journal = {Journal of Algebra},
year = {2025},
volume = {669},
number = {18},
pages = {401-444},
doi = {https://doi.org/10.1016/j.jalgebra.2025.02.007},
}

@article{WRAAlg2,
author = {García-Martínez, X. and Mancini, M.},
title = {Action accessible and weakly action representable varieties of algebras},
year = {2025},
pages = {submitted, preprint available at \texttt{arXiv:2503.17326}.},
}

@article{Datuashvili,
author = {Datuashvili, T.},
title = {Cohomologically trivial internal categories in categories of groups with operations},
journal = {Applied Categorical Structures},
volume = {3},
pages = {221-237},
year = {1995},
doi = {https://doi.org/10.1016/j.jalgebra.2025.02.007},
}

@article{Gray,
	author = {Gray, J. R. A.},
	title = {A note on the relationship between action accessible and weakly action representable categories},
	journal = {Theory and Applications of Categories},
	year = {2025},
	volume = {44},
	number = {8},
	pages = {272-276},

@article{MM,
	author = {Mancini, M.},
	title = {Biderivations of low-dimensional {L}eibniz algebras},
	journal = {H. Albuquerque, J. Brox, C. Martínez, P. Saraiva (eds.), Non-Associative Algebras and Related Topics. NAART 2020. Springer Proceedings in Mathematics \& Statistics},
	year = {2023},
	volume = {427},
	number = {8},
	pages = {127-136},
	publisher = {Springer Cham},
	doi = {https://doi.org/10.1007/978-3-031-32707-0_8},
}

@article{Cigoli,
title = {Action accessibility via centralizers},
journal = {Journal of Pure and Applied Algebra},
author = {Cigoli, A. S. and Mantovani, S.},
volume = {216},
number = {8-9},
pages = {1852-1865},
year = {2012},
doi = {https://doi.org/10.1016/j.jpaa.2012.02.023},
}

@article{Orzech,
author = {Orzech, G.},
title = {Obstruction theory in algebraic categories, I},
journal = {Journal of Pure and Applied Algebra},
volume = {2},
number = {4},
pages = {287-314},
year  = {1972},
doi= {https://doi.org/10.1016/0022-4049(72)90009-6}
}

@article{Metere,
author = {Cigoli, A. S. and Metere, G. and Montoli, A.},
title = {Obstruction theory in action accessible categories},
journal = {Journal of Algebra},
volume = {385},
number = {3},
pages = {27-46},
year  = {2013},
doi = {https://doi.org/10.1016/j.jalgebra.2013.03.020},
}

@article{Montoli,
author = {Montoli, A.},
title = {Action accesibility for categories of interest},
journal = {Theory and Applications of Categories},
volume = {23},
number = {1},
pages = {7-21},
year  = {2010},
}

@article{Tim,
author = {García-Martínez, X. and Tsishyn, M. and Van der Linden, T. and Vienne, C.},
title = {Algebras with representable representations},
journal = {Proceedings of the Edinburgh Mathematical Society},
volume = {64},
number = {2},
pages = {555-573},
year  = {2021},
doi={https://doi.org/10.1017/S0013091521000304},
}

@article{ActRepr,
author = {Borceux, F. and Janelidze, G. and Kelly, A.},
year = {2005},
number = {1},
pages = {244-286},
title = {On the representability of actions in a semi-abelian category},
volume = {14},
journal = {Theory and Applications of Categories}
}

@article{IntAct,
	author = {Borceux, F. and Janelidze, G. and Kelly, A.},
	year = {2005},
	number = {2},
	pages = {235-255},
	title = {Internal object actions},
	volume = {46},
	journal = {Commentationes Mathematicae Universitatis Carolinae}
}

@article{Casas,
author = {Casas, J. M. and Datuashvili, T. and Ladra, M.},
title = {Universal Strict General Actors and Actors in Categories of Interest},
journal = {Applied Categorical Structures},
volume = {18},
pages = {85-114},
year  = {2010},
doi = {https://doi.org/10.1007/s10485-008-9166-z},
}

@article{MacLane58,
author = {Mac Lane, S.},
title = {Extensions and obstructions for rings},
volume = {2},
journal = {Illinois Journal of Mathematics},
number = {3},
pages = {316-345},
year = {1958},
doi = {https://doi.org/10.1215/ijm/1255454537}
}

@article{loday1993version,
	title={Une version non commutative des algebres de Lie: les algebres de Leibniz},
	author={Loday, J.-L.},
	journal={L'Enseignement Math\'ematique},
	volume={39},
	number = {3-4},
	pages={269-293},
	year={1993}
}

@article{Semi-Ab,
title = {Semi-abelian categories},
journal = {Journal of Pure and Applied Algebra},
volume = {168},
number = {2},
pages = {367-386},
year = {2002},
doi = {https://doi.org/10.1016/S0022-4049(01)00103-7},
author = {Janelidze, G. and Márki, L. and Tholen, W.},
}

@article{act_accessible,
     author = {Bourn, D. and Janelidze, G.},
     title = {Centralizers in action accessible categories},
     journal = {Cahiers de Topologie et G\'eom\'etrie Diff\'erentielle Cat\'egoriques},
     volume = {50},
     number = {3},
     year = {2009},
     pages = {211-232},
}

\end{document}